\documentclass[11pt]{amsart}

\usepackage{color}
\usepackage[TS1,T1]{fontenc}
\usepackage[latin1]{inputenc}
\usepackage{amsfonts,amssymb,amsmath,amsthm,enumerate,latexsym}
\usepackage{hyperref}

\setlength{\textwidth}{16cm} \setlength{\textheight}{21.5cm}
\setlength{\oddsidemargin}{0.0cm}
\setlength{\evensidemargin}{0.0cm}

\newcommand{\R}{\mathbb{R}}

\newcommand{\defeq}{\mathrel{\mathop:}=}

\def\R{{\mathbb {R}}}

\newtheorem{teo}{Theorem}[section]

\newtheorem{corol}[teo]{Corollary}

\theoremstyle{remark}
\newtheorem{remark}[teo]{Remark}

\theoremstyle{definition}
\newtheorem{defi}[teo]{Definition}

\numberwithin{equation}{section}

\parskip 3pt

\begin{document}

\title[Fractional elliptic problems with nonlinear gradient sources and measures]
{Fractional elliptic problems with nonlinear gradient sources and measures}

\author[J.V. da Silva, P. Ochoa and A. Silva]{Jo\~{a}o Vitor da Silva, Pablo Ochoa and Anal\'ia Silva}

\address{J.V da Silva
\hfill\break\indent Departamento de Matem\'atica - Instituto de Ci\^{e}ncias Exatas.
 \hfill\break\indent Universidade de Bras\'{i}lia.
 \hfill\break\indent
  Campus Universit\'{a}rio Darcy Ribeiro, 70910-900.
 \hfill\break\indent Bras\'{i}lia - DF - Brazil.}
 \email{{\tt J.V.Silva@mat.unb.br}}


\address{P. Ochoa
\hfill\break\indent Facultad de Ingenier\'ia.
 \hfill\break\indent Universidad Nacional de Cuyo and CONICET.
 \hfill\break\indent
  Ciudad Universitaria - Parque General San Mart\'in .
 \hfill\break\indent 5500 Mendoza, Argentina.}
 \email{{\tt ochopablo@gmail.com }}

\address{A. Silva
\hfill\break\indent Instituto de Matem\'atica Aplicada San Luis, IMASL.
 \hfill\break\indent Universidad Nacional de San Luis and CONICET.
 \hfill\break\indent
Ej\'{e}rcito de los Andes 950.
 \hfill\break\indent D5700HHW San Luis, Argentina.}
 \email{{\tt asilva@dm.uba.ar }}
\urladdr[A.Silva]{https://analiasilva.weebly.com/}

\subjclass[2010]{35J61, 35R06, 35R11}
\keywords{Existence/regularity of solutions, weak solutions, Nonlocal operators, Fractional Laplace, }

\begin{abstract}
In this manuscript we deal with existence/uniqueness and regularity issues
of suitable weak solutions to nonlocal problems driven by
fractional Laplace type operators. Different from previous
researches, in our approach we consider gradient non-linearity
sources with subcritical growth, as well as appropriated measures as
sources and boundary datum. We provide an in-depth discussion on the
notions of solutions involved together with existence/uniqueness results in
different regimes and for different boundary value problems.
Finally, this work extends previous ones by dealing with more
general nonlocal operators, source terms and boundary data.
\end{abstract}

\maketitle

\section{Introduction}

\subsection{Main proposals and contrasts with former results}

In this article, we propose to study the existence/uniqueness and regularity of appropriate weak solutions for nonlocal quasi-linear problems involving measures, more precisely we consider
\begin{equation}\label{maineq}
\left\{
\begin{array}{rcll}
  (-\Delta)^\alpha u(x) & = &g(x, |\nabla u|) +\sigma \nu &  \text{in } \Omega,\\
  u(x) & = & \varrho\mu &  \text{in } \R^N\setminus\Omega,
\end{array}
\right.
\end{equation}
where $\varrho, \sigma \geq 0$, $\mu$ and $\nu$ are suitable Radon measures, $g: \Omega \times [0, \infty) \to [0, \infty)$ is a continuous function fulfilling certain growth conditions (to be presented \textit{a posteriori}) and $\Omega \subset \R^N$ is a $C^{2}$ bounded domain.

As the nonlinear term $g$ appears in the right-hand side, such a model \eqref{maineq} with $\varrho =0,$ may be understood as a Kardar-Parisi-Zhang stationary problem (models of growing interfaces) driving by fractional diffusion (see \cite{KPZ} for the model in the local setting and \cite{AP} for an instrumental work in the nonlocal scenario). On the other hand, the problem with the nonlinear (Hamiltonian) term in the left-hand side is the stationary counterpart of a Hamilton-Jacobi equation with a viscosity term under certain critical fractional diffusion (see \cite{Sil} and the references therein).

In the last two decades, the fractional Laplacian operator $\mathcal{L} = (-\Delta)^{\alpha}$, or more general elliptic linear integro-differential operators (with singular kernels), have been a classic topic of research in several fields of pure mathematics such as Geometry, Harmonic Analysis, PDEs and Probability. Furthermore, there has been renewed interest in these kind of operators due to their current connections with certain stochastic processes of L\`{e}vy type \cite{A04}, \cite{BGR61}, \cite{BKN02}, \cite{ChenS}, \cite{Kul97}, \cite{L00} theory of semigroups \cite{GMS13}, \cite{Stinga19}, recent progress in geometric analysis and conformal geometry \cite{CG11}, \cite{FV13}, \cite{GQ13}, and existence and regularity issues in a number of nonlocal diffusion and free boundaries problems \cite{BFR-O}, \cite{CR-OS17}, \cite{CS}, \cite{CS19}, \cite{FBSS18}, \cite{FBSSp19}, \cite{RS}, \cite{S} and \cite{Sil07}, just to mention a few.

We should also highlight that nonlocal type operators arise naturally in a number of applied mathematical modelling such as in continuum mechanics, image processing, crystal dislocation, Nonlinear Dynamics (Geophysical Flows), phase transition phenomena, population dynamics, nonlocal optimal control and game theory as pointed out in \cite{BCF}, \cite{BCF12}, \cite{BV}, \cite{C}, \cite{CV10}, \cite{CV}, \cite{DFV}, \cite{DPV}, \cite{GO}, \cite{L00} and the references therein. Just for illustration, the fractional heat equation may appear in probabilistic random-walk procedures and, in turn, the stationary case may do so in pay-off models (see \cite{BV} and the references therein). In the works \cite{MK} and \cite{MK1} the description of anomalous diffusion via fractional dynamics is investigated and various fractional partial differential equations are derived from L\`{e}vy random walk models, extending Brownian motion models in a natural way. Finally, fractional type operators are also encompassed in mathematical modeling of financial markets, since L\`{e}vy type processes with jumps take place as more accurate models of stock pricing (see e.g. \cite{A04} and \cite{CT} for some illustrative examples). In fact, the \textit{boundary condition}
$$
   u=\varrho \mu \,\,\textnormal{ in } \,\mathbb{R}^{N}\setminus \Omega
$$
which is given in the whole complement may be interpreted from the stochastic point of view as the fact that a L\`{e}vy process can exit the domain $\Omega$ for the first time jumping to any subset $E \subset \mathbb{R}^{N}\setminus \Omega$ with \textit{probability density} given by $\varrho\mu(E)$.

As a prelude to our investigations, let us present a historical
overview regarding recent advances in semi-linear elliptic problems
with measure data. In the pioneering work \cite{Bre} (see also
\cite{BB}), Brezis studied the existence/uniqueness of solutions to
semi-linear Dirichlet elliptic problems of the form
\begin{equation*}
\begin{cases}
-\Delta u +g(u) = \nu &  \text{in } \Omega,\\
\qquad\qquad\,\,\,\,\, u =0 &  \text{in } \partial\Omega,
\end{cases}
\end{equation*}
where $\nu$ is a bounded measure, $g$ is non-decreasing, positive and satisfies the integral growth condition
$$
  \int_{1}^{\infty} \frac{(g(s)-g(-s))}{s^{2\frac{N-1}{N-2}}}ds< \infty.
$$
Observe that when $g$ is a $p-$th power, i.e. $g(s)=s^{p}$, the above integrability condition is satisfied whenever
$$
  p< \frac{N}{N-2}.
$$
When $p \geq \frac{N}{N-2}$, solutions might not exist (see for instance \cite{BB}).

Posteriorly, V\'eron generalizes the former results in \cite{Ve4} by replacing the Laplacian by more general second-order (uniformly) elliptic operators (in divergence form), allowing for measure sources so that
$$
  \int_\Omega \rho^{\beta}(x)d|\nu| < \infty,
$$
for $\beta\in [0, 1]$ and where $\rho$ is the distance-to-the-boundary  function. The non-linearity $g$ now is assumed to satisfy the integrability condition
$$
  \int_{1}^{\infty} \frac{(g(s)-g(-s))}{s^{2\frac{N+\beta-1}{N+\beta-2}}}ds< \infty.
$$
Finally, in \cite{NV}, Nguyen-Phuoc and V\'eron obtained existence results of solutions to
\begin{equation*}
\begin{cases}
-\Delta u +g(|\nabla u|) = \nu &  \text{in } \Omega,\\
\qquad\qquad\qquad u =0 &  \text{in } \partial\Omega,
\end{cases}
\end{equation*}
where $g$ fulfils the integrability assumption
$$
   \int_{1}^{\infty} \frac{g(s)}{s^{\frac{2N-1}{N-1}}}ds< \infty.
$$
For an instrumental survey on elliptic Dirichlet problems involving the Laplace operator and measures we recommend the Marcus and V\'{e}ron's Monograph \cite{Marcus-V} and the references therein.

 Now, let us highlight some pivotal works regarding existence and regularity for problems driven by fractional diffusion with measure datum (for $g\equiv0$)
 $$
 -\mathcal{L} u = \nu \quad \text{in} \quad \Omega \subset \R^N.
 $$
 Such results for the fractional Laplacian (for powers $\alpha \in \left(\frac{1}{2}, 1\right)$) have been obtained in \cite{KPU}, where the approach is via duality method. In the recent work \cite{CV1} the authors deal with fractional equations (this time for any $\alpha \in (0, 1)$) involving measures, where the study is carried out through fundamental solutions. In \cite{AAN10} is proposed a notion of re-normalised solution for semi-linear equations. The work \cite{KMS15} (see also the enlightening survey \cite{KMS18}) deals with more general nonlinear integro-differential equations (possibly degenerate or singular) with measurable, elliptic/coercive and symmetric kernels, thereby obtaining existence of suitable weak solutions (SOLA - Solutions Obtained as Limits of Approximations) and regularity results by means of nonlinear potentials of Wolff type.

Concerning elliptic problems with measure source governed by
fractional Laplacian (for $g\not\equiv 0$), recently Chen and
V\'{e}ron in \cite{CV1} investigated the semi-linear fractional
equation
$$
\begin{cases}
(-\Delta)^{\alpha} u + g(u) = \nu &  \text{in } \Omega\\
\qquad\quad u =0 &  \text{in } \mathbb{R}^{N}\setminus \Omega,
\end{cases}
$$
where $\nu  \in \mathcal{M}(\Omega, \rho^{\beta})$, i.e., $\displaystyle \int_\Omega \rho^{\beta}(x)d|\nu| < \infty,$ with $0\le \beta \le \alpha$. In such a work the authors proved existence/uniqueness of solutions when $g$ is nondecreasing and satisfies
$$
   \int_{1}^{\infty} \frac{g(s)-g(-s)}{s^{1+k_{\alpha, \beta}}}ds< \infty,
$$
where
$$
k_{\alpha, \beta} = \left\{
\begin{array}{rcl}
  \frac{N}{N-2\alpha} & \mbox{if} & \beta \in \left[0, \frac{N-2\alpha}{N}\alpha\right]\\
  \frac{N+\alpha}{N-2\alpha+\beta} & \mbox{if} &  \beta \in \left(\frac{N-2\alpha}{N}\alpha, \alpha\right],
\end{array}
\right.
$$

With respect to fractional Laplacian with gradient source term,
according to our scientific knowledge up to date, the more recent
findings regarding existence/uniqueness and regularity issues of
solutions to problems like \eqref{maineq} can be found in \cite{AP},
\cite{VC1} and \cite{CV} respectively. In \cite{AP} {(see also \cite{AP20} for a corrigendum) Abdellaoui and Peral address an extensive and complete analysis to
\begin{equation*}
\begin{cases}
(-\Delta)^{\alpha} u = |\nabla u|^q + \lambda f &  \text{in } \Omega,\\
\qquad\quad u =0 &  \text{on } \mathbb{R}^{N}\setminus \Omega\\
\qquad\quad u >0 &  \text{in } \Omega,
\end{cases}
\end{equation*}
regarding existence/uniqueness and regularity of weak solutions in three different cases: subcritical, $1<q<2\alpha$; critical, $q=2\alpha$; and supercritical, $q>2\alpha$, for $\alpha \in \left(\frac{1}{2}, 1\right) $ and $f$ a measurable non-negative function with suitable integrability hypotheses. On the other hand, in \cite{CV}, the authors treated the problem
\begin{equation*}
\begin{cases}
(-\Delta)^{\alpha} u = \pm g(|\nabla u|) + \nu &  \text{in } \Omega,\\
\qquad\quad u =0 &  \text{in } \mathbb{R}^{N}\setminus \Omega,
\end{cases}
\end{equation*}
while the case for prescribed measures in $\mathbb{R}^{N}\setminus
\Omega$ was considered in \cite{VC1}. In both cases, the
non-linearity is assumed to satisfy an integral or polynomial growth
condition.

These former results have been our starting point in obtaining
qualitative results for models like \eqref{maineq} under
appropriated assumption on the data, and with nonlinear gradient
sources and measures.

In order to finish these theoretical landmarks, let us briefly
present the more current existence/uniqueness results related to
measure supported on the boundary. In this direction, in \cite{CH}
the authors studied weak solutions of the fractional elliptic
problem
\begin{equation}\label{Eq1.2}
\left\{
\begin{array}{rcll}
  (-\Delta)^\alpha u(x) + \varepsilon g(u)& = & k \frac{\partial \nu}{\partial \overrightarrow{n}_x^{\alpha}} &  \text{in } \overline{\Omega},\\
  u(x) & = & 0 &  \text{in } \R^N\setminus\overline{\Omega},
\end{array}
\right.
\end{equation}
where $k>0$, $\varepsilon = \pm 1$, $\nu$ is a bounded Radon measure supported in $\partial \Omega$ and $\frac{\partial \nu}{\partial \overrightarrow{n}_x^{\alpha}}$ is defined in a suitable distributional sense (see Section \ref{Sec6} for more detail). In such a context, they prove (for $\varepsilon =1$) that \eqref{Eq1.2} admits a unique weak solution when $g$ is a continuous nondecreasing function satisfying the integral condition
$$
  \displaystyle \int_{1}^{+\infty} \frac{(g(s)-g(-s))}{s^{1+\frac{N+\alpha}{N-\alpha}}}ds< +\infty.
$$
On the other hand, when $\varepsilon = -1$ and $\nu$ is nonnegative, by employing the Schauder's fixed point theorem, they obtain existence of a positive solution under the hypothesis that $g$ is a continuous function satisfying:
$$
  \displaystyle \int_{1}^{+\infty} \frac{g(s)}{s^{1+\frac{N+\alpha}{N-\alpha}}}ds< +\infty.
$$

In contrast with \cite{CH}, in our approach the boundary term $\frac{\partial \nu}{\partial \overrightarrow{n}_x^{\alpha}}$ will just appear (in a natural way) when we invoke the nonlocal integration by parts criterium for our definition of solution. Furthermore, we will focus our attention in proving existence results to problems like \eqref{maineq} where $\nu$ is supported on the boundary.

\subsection{Our main contributions}

In this work, we propose to study problem \eqref{maineq} with a non-linearity $g$ depending on both the spatial and gradient variables. Roughly speaking, it will be assumed that $g$ is continuous,  verifies a polynomial growth in $|\nabla u|$ and it is integrable in $x$. Therefore, the main contributions of our work will be:
\begin{enumerate}
\item A detailed discussion of the appropriate notion of distributional solutions to elliptic integro-differential problems involving measures as both: sources and Dirichlet boundary data.
\item Existence  of solutions in  two different regimes based on different ranges for a $p-$growth type of $g$ w.r.t. $|\nabla u|$:
\begin{enumerate}
\item sub-linear regime: $0 < p \leq 1$.
\item super-linear and sub-critical: $1 <p<p^{\ast} \defeq \frac{N}{N-(2\alpha-1)}$. We also state uniqueness in this case.
\end{enumerate}
\item Stability of solutions under perturbations of the data.
\item Extension of the analysis to existence of solutions  to boundary value problems with measures concentrated on $\partial \Omega$ and $\mathbb{R}^{N}\setminus \overline{\Omega}$.
\item Discussion to more general fractional type operators.
\end{enumerate}

Let us discuss heuristically the role played for the critical exponent $p^{\ast} = \frac{N}{N-(2\alpha-1)}$.  First, observe that if $u$ solves
 \begin{equation*}
\begin{cases}
(-\Delta)^{\alpha} u = \nu &  \text{in } \Omega,\\
\qquad\quad u =0 &  \text{in } \mathbb{R}^{N}\setminus \Omega,
\end{cases}
\end{equation*}
for a bounded measure $\nu$ in $\Omega$, then the following Green representation for $u$ holds
$$
  u(x)=\int_\Omega G_\alpha(x, y)d\nu(y)
$$
where $G_\alpha$ is the Green kernel for the fractional Laplacian in $\Omega$ (see \cite{VC1}, \cite{CV1} and \cite{CV}). Regarding this matter, let us remind that Bogdan-Kulczycki-Nowak, and Bogdan-Jakubowskiin in \cite{BKN02} and \cite{BJ} (see also \cite[Lemma 2.10]{AP}), by applying a probabilistic approach, were able to prove the following (point-wise) estimates of the Green function (and its gradient) provided $\frac{1}{2}<\alpha<1$:
$$
\displaystyle |G_{\alpha}(x, y)|\leq C_1\min\left\{\frac{1}{|x-y|^{N-2\alpha}}, \frac{\mathrm{d}^{\alpha}(x)}{|x-y|^{N-\alpha}}, \frac{\mathrm{d}^{\alpha}(y)}{|x-y|^{N-\alpha}}\right\}
$$
and
$$
\displaystyle  |\nabla_x G_{\alpha}(x, y)|\leq C_2G_{\alpha}(x, y)\max\left\{\frac{1}{|x-y|}, \frac{1}{\mathrm{d}(x)}\right\},
$$
where $\displaystyle \mathrm{d}: \overline{\Omega} \to \R_{+}$ is the distance function to the boundary of $\Omega$. Particularly, by \cite[Lemma 2.4]{AP20}
$$
\displaystyle  |\nabla_x G_{\alpha}(x, y)|\leq \frac{C_3}{|x-y|^{N-(2\alpha-1)}\mathrm{d}(x)^{1-\alpha}},
$$
where $C_i$, with $i = 1, \ldots, 3$, are universal constants independent of $x$ and $y$. As a consequence (see e.g. \cite[Lemma 2.12]{AP20}), $|\nabla u|\mathrm{d}^{1-\alpha} \in L^{q}(\Omega)$ for all $q<p^{\ast}$. Nevertheless, under appropriate assumptions on the growth of $g$, we shall prove in Section \ref{subcritical} that solutions to \eqref{maineq} have the regularity $W_0^{1, q}(\Omega)$ for all $q < p^{\ast}$.

For the regime $p\geq p^{\ast}$, we are not able to appeal to estimates or compactness properties of the Green operator and hence a different approach has to be undertaken (see \cite[Theorem 1.2.2]{Marcus-V} for the local case and compare with \cite{KMS15} and \cite{KMS18} for the nonlocal case when $g \equiv 0$).

Finally, we recommend to interested reader \cite{BGR61}, \cite{ChenS}, \cite{Kul97} for several properties of Green function/Poisson Kernel of certain symmetric $\alpha-$stable processes in domains via probabilistic approaches, and \cite{Bucur19} for a self-contained expository survey (without probabilistic methods) on the representation formula for the Green function on the ball.

\subsection*{Organization of the paper} In Section \ref{prelim}, we provide the basic definitions and assumptions used throughout the work. Moreover, we give a deep discussion and motivation of the notion of solutions for \eqref{maineq}. In Section \ref{subcritical}, we deal with existence of solutions in the sub-critical framework. Some stability results are provided in Section \ref{Stability}. A discussion of boundary value problems with the addition of measures concentrated on the boundary of $\Omega$ is supplied in Section \ref{Sec6}. We closed the paper with Section \ref{ClosRem} with some remarks for more general fractional type operators.

\section{Preliminaries and initial insights into the theory}\label{prelim}

In order to introduce an appropriate notion of distributional solutions, we will present some useful definitions. For $\alpha \in (0, 1)$ and $u: \mathbb{R}^{N}\to \mathbb{R}$, the fractional Laplacian $(-\Delta)^{\alpha}$ is given by
$$
   (-\Delta)^{\alpha}u(x) \defeq \lim_{\epsilon \to 0}(-\Delta)^{s}_{\epsilon}u(x)
$$
where
$$
  (-\Delta)^{\alpha}_\epsilon u(x)\defeq C_{N, \alpha}\int_{\mathbb{R}^{N}}\frac{u(x)-u(y)}{|x-y|^{N+2\alpha}}\chi_{\epsilon}(|x-y|)dy
$$
with
\begin{equation*}
\chi_t(|x|) \defeq \left\lbrace
  \begin{array}{l}
  0,  \quad |x|< t\\
    1,  \quad |x|\geq t,
  \end{array}
  \right.
\end{equation*}
and
$$
\displaystyle C_{N, \alpha} = \left(\int_{\R^N} \frac{1-\cos(\xi_1)}{|\xi|^{N+2\alpha}}d \xi\right)^{-1} = -\frac{2^{2\alpha}\Gamma(\frac{N}{2}+\alpha)}{\pi^{\frac{N}{2}}\Gamma(-\alpha)}
$$
being a normalization constant to have the following identity:
$$
  \mathcal{F}^{-1}\left(|\xi|^{2\alpha}\mathcal{F} u\right) = (-\Delta)^{\alpha}u, \quad \forall \,\,\xi\in \R^N
$$
in $\mathcal{S}\left(\R^N\right)$ (the class of Schwartz functions),
where $\mathcal{F}$ states the Fourier transform (see
\cite{Stinga19}).

Now, we will introduce the appropriated test functions space.

\begin{defi}
  We say that a function $\phi \in C^0(\mathbb{R}^{N})$ belongs to $\mathbb{X}_\alpha(\Omega)$ if and only if the following holds:
\begin{enumerate}
\item $\text{supp}(\phi) \subset \overline{\Omega}$.
\item The fractional Laplacian $(-\Delta)^{\alpha}\phi(x)$ exists for all $x \in \Omega$ and there is $C>0$ so that
    $$
      |(-\Delta)^{\alpha}\phi(x)|\leq C.
    $$
\item There are $\varphi \in L^1(\Omega)$ and $\epsilon_0 >0$ so that
$$
   |(-\Delta)^{\alpha}_\epsilon \phi(x)|\leq \varphi(x),
$$
a.e. in $\Omega$ and for all $\epsilon \in (0, \epsilon_0)$.
\end{enumerate}
\end{defi}

From now on, we denote by $G_\alpha$ the \textit{Green kernel} of $(-\Delta)^{\alpha}$ in $\Omega$ and by $\mathbb{G}_\alpha[\cdot]$ the associated \textit{Green operator} defined by
$$
  \mathbb{G}_\alpha[\nu](x) \defeq \int_\Omega G_\alpha(x, y)d\nu(y), \qquad \nu \in \mathcal{M}(\Omega),
$$
where $\mathcal{M}(\Omega)$ states the space of Radon measures on $\Omega$.

Throughout the manuscript, we shall assume the following on the data of problem \eqref{maineq}:

\begin{enumerate}
\item $\alpha \in \left(\frac{1}{2}, 1\right)$;
\item $0 \leq g \in C^0(\Omega \times [0, \infty))$ satisfying a growth condition of the form:
$$
g(x, s)\leq c s^{p}+\varepsilon|f(x)|,
$$
where $s, \varepsilon \geq 0$, $p \in (0, p^{\ast})$, $f \in L^{1}(\Omega)$ and $c$ is an appropriate positive constant;
\item $\varrho, \sigma \geq 0$ are constant;
\item $\nu \in \mathcal{M}(\Omega)$ is non-negative, $\mu \in \mathcal{M}(\mathbb{R}^{N}\setminus \Omega)$ with $\,\text{supp}(\mu) \subset \mathbb{R}^{N}\setminus \overline{\Omega}$ and $\mu(\mathbb{R}^{N}\setminus \Omega) < \infty$.
\end{enumerate}

In the following definition, we introduce the class of weak solutions  to problem \eqref{maineq} with homogeneous data in $\mathbb{R}^{N}\setminus \Omega$.

\begin{defi}\label{weak solu1} A function $u \in L^{1}(\Omega)$, with $|\nabla u|\in L_{loc}^{1}(\Omega)$ and  $g(x, |\nabla u|)\in L^{1}(\Omega)$, is a weak solution to problem
\begin{eqnarray}\label{boundary problem}
 \left\lbrace
  \begin{array}{l}
    (-\Delta)^{\alpha}u = g(x, |\nabla u|) + \sigma \nu  \quad \, \,\text{in } \Omega\\
    \qquad \quad  u=0  \,\,\,\,\,\,\,\quad\qquad \qquad\quad\text{ in } \mathbb{R}^{N}\setminus \Omega, \\
  \end{array}
  \right.
\end{eqnarray}
if for any $\phi \in \mathbb{X}_\alpha(\Omega)$, there holds
$$
   \int_{\Omega} u(-\Delta)^{\alpha}\phi dx = \int_\Omega\phi g(x, |\nabla u|)dx + \sigma \int_\Omega \phi d \nu.
$$
\end{defi}

For non-zero boundary data, we provide a definition based on the following: suppose that $u \in C^{2}(\mathbb{R}^{N})$, bounded and $\phi \in C^{2}(\mathbb{R}^{N})$ so that $\phi=0$ in $\mathbb{R}^{N}\setminus \overline{\Omega}$. If $u$ is a classical solution to \eqref{maineq}, we would have that $u$ is the probability density of the measure $\varrho \mu$ in $\mathbb{R}^{N}\setminus \Omega$ and moreover

\begin{equation}\label{nonlocal0}
\int_{\Omega} \phi (-\Delta)^{\alpha}u dx=\int_\Omega g(x, |\nabla u|)\phi dx + \sigma\int_\Omega \phi d \nu.
\end{equation}
By nonlocal integration by parts \cite{DOV}, it follows

\begin{equation}\label{nonlocal1}
\int_\Omega \phi (-\Delta)^{\alpha}u dx= \frac{C_{N, \alpha}}{2}\int_{\mathbb{R}^{2N}\setminus (\mathcal{C} \Omega)^{2}}\dfrac{(u(x)-u(y))(\phi(x)-\phi(y))}{|x-y|^{N+2\alpha}}dxdy -\int_{\mathbb{R}^{N}\setminus \Omega}\phi \mathcal{N}_\alpha u dx,
\end{equation}
where $\mathcal{N}_\alpha$ is the nonlocal normal derivative introduced in \cite{DOV}  given by
$$
   \mathcal{N}_\alpha v(x)= c_{N, \alpha}\int_\Omega \frac{v(x)-v(y)}{|x-y|^{N+2\alpha}}dy, \quad x \in \mathbb{R}^{N}\setminus \overline{\Omega},
$$
and
$$
\displaystyle  c_{N, \alpha} = \left(\int_{\Omega} \frac{dy}{|x-y|^{N+2\alpha}}\right)^{-1}
$$
is a universal constant. Since $\phi=0$ in $\mathbb{R}^{N}\setminus \overline{\Omega}$ and applying again integration by parts, equation \eqref{nonlocal1} becomes

\begin{equation}\label{nonlocal2}
\begin{split}
\int_\Omega \phi (-\Delta)^{\alpha}u dx & = \frac{C_{N, \alpha}}{2}\int_{\mathbb{R}^{2N}\setminus (\mathcal{C} \Omega)^{2}}\dfrac{(u(x)-u(y))(\phi(x)-\phi(y))}{|x-y|^{N+2\alpha}}dxdy \\ &  =\int_{\Omega} u (-\Delta)^{\alpha}\phi dx+  \int_{\mathbb{R}^{N}\setminus \Omega} u \mathcal{N}_\alpha \phi dx\\& = \int_\Omega u(-\Delta)^{\alpha}\phi dx + \varrho\int_{\mathbb{R}^{N}\setminus \Omega}\mathcal{N}_\alpha \phi d\mu.
\end{split}
\end{equation}
Observe that in view of the assumptions on $\mu$, the integral
$$
\int_{\mathbb{R}^{N}\setminus \Omega}\mathcal{N}_\alpha \phi d\mu
$$
is defined for all $\phi \in \mathbb{X}_\alpha(\Omega)$. Plugging \eqref{nonlocal2} into \eqref{nonlocal0}, we arrive at
\begin{equation}\label{nonlocal3}
\int_{\Omega} u (-\Delta)^{\alpha}\phi dx =\int_{\Omega} g(x, |\nabla u|)\phi dx+ \sigma\int_\Omega \phi d \nu - \varrho\int_{\mathbb{R}^{N}\setminus \Omega}\mathcal{N}_\alpha \phi d\mu,
\end{equation}
for any test function $\phi \in \mathbb{X}_\alpha(\Omega)$ (compare the expression \eqref{nonlocal3} with its local versions in \cite{VC1}, \cite{Marcus-V} and the references therein). Motivated by the above considerations we give the following definition:

\begin{defi}\label{weak solu2} A function $u \in L^{1}(\Omega)$, with $|\nabla u|\in L_{loc}^{1}(\Omega)$ and  $g(x, |\nabla u|)\in L^{1}(\Omega)$, is a weak solution to problem \eqref{maineq} if the integral equality \eqref{nonlocal3} holds for any $\phi \in \mathbb{X}_\alpha(\Omega)$.
\end{defi}

Regarding uniqueness, let us
present a useful comparison result to fractional quasi-linear
problems with gradient source:

\begin{teo}[{\bf Comparison Principle, \cite[Theorem 3.1]{AP}}]\label{CompPrin} Let $f \in L^1(\Omega)$ be a  non-negative  function.  Consider  $w_1,w_2$  two positive  functions such that $w_1, w_2\in W^{1, q}(\Omega)$ for all $q< p^{\ast}$, $(-\Delta)^\alpha w_1, (-\Delta)^\alpha w_2 \in L^1(\Omega)$ and,
$$
\left\{
\begin{array}{rclcl}
  (-\Delta)^\alpha w_1 & \leq  & G(x, \nabla w_1) & \text{in} &\Omega \\
  (-\Delta)^\alpha w_2 & \geq  & G(x, \nabla w_2) & \text{in} &\Omega \\
  w_1 & \leq & w_2 & \text{on} & \R^n \setminus \Omega,
\end{array}
\right.
$$where
$$
 \displaystyle   G: \Omega \times \R^N \to \R^{+} \quad \text{verifies} \quad |G(x, \nabla w_1)-G(x, \nabla w_2)|\leq C\mathrm{b}(x)|\nabla w_1-\nabla w_2|,
$$
for some  $C>0$ and  $\mathrm{b}\in L^{\sigma}(\Omega)$ with  $\sigma>\frac{N}{2\alpha-1}$. Then, $w_2 \geq w_1$ in $\Omega$.
\end{teo}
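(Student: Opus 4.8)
The standard route for a comparison principle of this type is to test the differential inequality satisfied by $w_1-w_2$ against a carefully chosen function supported where $w_1>w_2$, and then to absorb the first-order term using the $L^\sigma$-integrability of $\mathrm{b}$ together with the gradient regularity $|\nabla w_i|\,\mathrm{d}^{1-\alpha}\in L^q(\Omega)$ for $q<p^\ast$ coming from the Green-function estimates recalled in the introduction. Concretely, set $w\defeq w_1-w_2$ and $\Omega^+\defeq\{x\in\Omega:\, w(x)>0\}$. Subtracting the two inequalities gives, in the distributional sense on $\Omega$,
\[
(-\Delta)^\alpha w \;\le\; G(x,\nabla w_1)-G(x,\nabla w_2)\;\le\; C\,\mathrm{b}(x)\,|\nabla w_1-\nabla w_2|\;=\;C\,\mathrm{b}(x)\,|\nabla w|,
\]
while $w\le 0$ on $\R^N\setminus\Omega$. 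The goal is to show $\Omega^+$ is null.

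First I would use Kato's inequality for the fractional Laplacian, $(-\Delta)^\alpha w^+\le \chi_{\{w>0\}}(-\Delta)^\alpha w$ in the sense of distributions, to obtain
\[
(-\Delta)^\alpha w^+ \;\le\; C\,\mathrm{b}(x)\,|\nabla w^+|\quad\text{in }\Omega,\qquad w^+=0\ \text{in }\R^N\setminus\Omega.
\]
Next I would test this against $w^+$ itself (or rather against a truncation/regularization, then pass to the limit), using the nonlocal integration-by-parts identity from \cite{DOV} to get the Gagliardo seminorm on the left:
\[
\frac{C_{N,\alpha}}{2}\iint_{\R^{2N}\setminus(\mathcal{C}\Omega)^2}\frac{(w^+(x)-w^+(y))^2}{|x-y|^{N+2\alpha}}\,dx\,dy \;\le\; C\int_{\Omega}\mathrm{b}(x)\,|\nabla w^+|\,w^+\,dx.
\]
The left side controls $\|w^+\|_{\dot H^\alpha}^2$, hence $\|w^+\|_{L^{2^\ast_\alpha}}^2$ with $2^\ast_\alpha=\frac{2N}{N-2\alpha}$ by the fractional Sobolev inequality. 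For the right side, I would apply H\"older with the triple of exponents matching $\mathrm{b}\in L^\sigma$, $|\nabla w^+|$ in the Lorentz/Lebesgue space dictated by $p^\ast$, and $w^+$ in $L^{2^\ast_\alpha}$; the arithmetic constraint $\sigma>\frac{N}{2\alpha-1}$ is exactly what makes these exponents admissible (this is the reason $p^\ast=\frac{N}{N-(2\alpha-1)}$ enters). One then arrives at an inequality of the form $\|w^+\|_{\dot H^\alpha}^2\le C\,\|\mathrm{b}\|_{L^\sigma(\Omega^+)}\,\|w^+\|_{\dot H^\alpha}\,\|\nabla w^+ \mathrm{d}^{1-\alpha}\|_{L^{q}}$, and since $\|\mathrm{b}\|_{L^\sigma(\Omega^+)}$ can be made small by first establishing the estimate on subdomains where $|\Omega^+|$ is small (a standard localization/bootstrap), one concludes $w^+\equiv 0$.

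The main obstacle I anticipate is making the test-function argument rigorous: $w^+$ is not admissible as a test function in $\mathbb{X}_\alpha(\Omega)$, and neither $w_1$ nor $w_2$ is smooth, so one must regularize — for instance by mollification, or by truncating $w^+$ at level $k$ and using $T_k(w^+)$, or by working with the Green representation $w_i=\mathbb{G}_\alpha[\cdot]+(\text{exterior data})$ and exploiting the pointwise gradient bound $|\nabla_x G_\alpha(x,y)|\le C_3 |x-y|^{-(N-(2\alpha-1))}\mathrm{d}(x)^{\alpha-1}$ directly. One must check that the nonlocal boundary term $\int_{\R^N\setminus\Omega}w^+\mathcal N_\alpha w^+$, which has a favorable sign because $w^+\ge 0$ everywhere and vanishes outside $\Omega$, is correctly handled, and that all integrals converge — this is where the hypotheses $w_i\in W^{1,q}$ for all $q<p^\ast$ and $(-\Delta)^\alpha w_i\in L^1(\Omega)$ are consumed. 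Since this is precisely \cite[Theorem 3.1]{AP}, I would follow the scheme there, adapting only the bookkeeping for the general Lipschitz-in-gradient nonlinearity $G$ in place of the pure power $|\nabla u|^q$.
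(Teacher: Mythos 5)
First, a structural remark: the paper does not actually prove this theorem. It is imported verbatim as \cite[Theorem 3.1]{AP} and used as a black box, so there is no in-paper argument to compare yours against; your proposal has to be judged on its own terms, and as written it has a genuine gap at its central step.

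The gap is the energy method itself. Your plan is to test $(-\Delta)^\alpha w^+ \le C\mathrm{b}|\nabla w^+|$ against $w^+$ so as to produce $[w^+]_{H^\alpha}^2$ on the left and then absorb the right-hand side. But the hypotheses only give $w_1,w_2\in W^{1,q}(\Omega)$ for $q<p^{\ast}=\frac{N}{N-(2\alpha-1)}$ and $(-\Delta)^\alpha w_i\in L^{1}(\Omega)$; since $p^{\ast}<\frac{N}{N-1}\le 2$ for $N\ge 2$, while $W^{1,q}\hookrightarrow H^{\alpha}$ would require $q\ge \frac{2N}{N+2(1-\alpha)}>p^{\ast}$ when $N\ge 2$, the Gagliardo seminorm $[w^+]_{H^\alpha}$ need not be finite. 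This is not a regularization technicality that truncation or mollification can fix: the limiting quantity itself can be infinite, which is exactly the well-known obstruction to energy methods for $L^1$/measure-data problems. Moreover, even formally the H\"older bookkeeping on the right does not close: with $\mathrm{b}\in L^{\sigma}$, $\sigma>\frac{N}{2\alpha-1}$, $|\nabla w^+|\in L^{q}$ with $q<p^{\ast}$, and $w^+\in L^{2^{\ast}_\alpha}$, one has
$$
\frac{1}{\sigma}+\frac{1}{q}+\frac{1}{2^{\ast}_\alpha} \;>\; \frac{N-2\alpha+1}{N}+\frac{N-2\alpha}{2N}\;=\;\frac{3N-6\alpha+2}{2N},
$$
which exceeds $1$ whenever $N>6\alpha-2$ (so for every $N\ge 4$, and for most $\alpha$ when $N=2,3$). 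Thus the three exponents are \emph{not} admissible, contrary to your claim that $\sigma>\frac{N}{2\alpha-1}$ is what makes them so. Finally, even granting an inequality of the form $\|w^+\|_{H^\alpha}\le C\|\mathrm{b}\|_{L^\sigma(\Omega^+)}\|\,|\nabla w^+|\mathrm{d}^{1-\alpha}\|_{L^q}$, it does not self-improve: the factor $\|\,|\nabla w^+|\mathrm{d}^{1-\alpha}\|_{L^q}$ is not controlled by $\|w^+\|_{H^\alpha}$, and there is no mechanism forcing $|\Omega^+|$, hence $\|\mathrm{b}\|_{L^\sigma(\Omega^+)}$, to be small.

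Your opening moves are sound (subtracting the inequalities, Kato's inequality $(-\Delta)^\alpha w^+\le \chi_{\{w>0\}}(-\Delta)^\alpha w$, and the vanishing of the exterior term since $w^+\equiv 0$ off $\Omega$), but the argument must then be closed through the Green representation rather than the energy. From $(-\Delta)^\alpha w^+\le C\mathrm{b}|\nabla w^+|$ with zero exterior datum one gets $0\le w^+\le \mathbb{G}_\alpha[C\mathrm{b}|\nabla w^+|]$, and the hypothesis $\sigma>\frac{N}{2\alpha-1}=(p^{\ast})^{\prime}$ is precisely the condition under which the kernel bound $|\nabla_x G_\alpha(x,y)|\lesssim |x-y|^{-(N-2\alpha+1)}\mathrm{d}(x)^{\alpha-1}$ yields $\sup_y\int_\Omega \mathrm{b}(x)|\nabla_x G_\alpha(x,y)|\,dx\le C\|\mathrm{b}\|_{L^\sigma}$, i.e.\ makes $h\mapsto \mathrm{b}\,|\nabla \mathbb{G}_\alpha[h]|$ bounded on $L^{1}(\Omega)$ and, after the standard Kato-class splitting of $\mathrm{b}$, contractive. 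This forces $\mathrm{b}|\nabla w^+|=0$ and then $w^+=0$. That Green-function/Kato-class mechanism — the substance of \cite[Theorem 3.1]{AP} and of the Bogdan--Jakubowski estimates \cite{BJ} for $(-\Delta)^\alpha$ perturbed by a gradient — is the ingredient your proposal is missing.
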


 A straightforward consequence of Theorem \ref{CompPrin}  is the following.

\begin{corol}\label{Uniqueness} Let $1 \leq p < p^{*}$ and  let $b$ and $C$ be as in Theorem \ref{CompPrin}.  Suppose that $g: \Omega \times \R_{+} \to \R_{+}$ satisfies:
\begin{equation}\label{assump g}
|g(x, |\nabla u_1|)-g(x, |\nabla u_2|)|\leq C\mathrm{b}(x)|\nabla u_1-\nabla u_2|,
\end{equation}for $u_1$ and $ u_2$ so that:
\begin{equation}\label{reg union}
u_1, u_2 \in \bigcup_{1 \leq q < p^{\ast}} W^{1, q}(\Omega)
\end{equation}and:
\begin{equation}\label{des u}
\left\{
\begin{array}{rclcl}
  (-\Delta)^\alpha u_1 & \leq  & g(x, |\nabla u_1|) + \sigma \nu& \text{in} &\Omega \\
  (-\Delta)^\alpha u_2 & \geq  & g(x, |\nabla u_2|) + \sigma \nu & \text{in} &\Omega \\
  u_1 & \leq & u_2 & \text{on} & \R^n \setminus \Omega,
\end{array}
\right.
\end{equation}Then $u_2 \geq u_1$.
\end{corol}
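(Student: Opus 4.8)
The plan is to read the statement off Theorem \ref{CompPrin}, the single new ingredient being the passage from a bound in $|\nabla u|$ to one in $\nabla u$. First I would set $G(x,\xi) := g(x,|\xi|)$, which is nonnegative because $g\ge 0$, and observe that, by the reverse triangle inequality $\bigl||\xi_1|-|\xi_2|\bigr|\le|\xi_1-\xi_2|$ together with hypothesis \eqref{assump g},
$$
|G(x,\xi_1)-G(x,\xi_2)| = \bigl|g(x,|\xi_1|)-g(x,|\xi_2|)\bigr| \le C\,\mathrm{b}(x)\,\bigl||\xi_1|-|\xi_2|\bigr| \le C\,\mathrm{b}(x)\,|\xi_1-\xi_2|
$$
for a.e. $x\in\Omega$ and all $\xi_1,\xi_2\in\R^N$, so that $G$ fulfils the Lipschitz hypothesis of Theorem \ref{CompPrin} with the very same constant $C>0$ and the very same $\mathrm{b}\in L^{\sigma}(\Omega)$, $\sigma>\frac{N}{2\alpha-1}$. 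The regularity required by Theorem \ref{CompPrin}, namely $w_i\in W^{1,q}(\Omega)$ for every $q<p^{\ast}$, is precisely \eqref{reg union}; the exterior ordering $w_1\le w_2$ on $\R^N\setminus\Omega$ is the third line of \eqref{des u}; and positivity of $w_1,w_2$ together with $(-\Delta)^\alpha w_i\in L^1(\Omega)$ belong to the solution class at hand.

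The only structural discrepancy with Theorem \ref{CompPrin} is the additive source $\sigma\nu$ appearing in both inequalities of \eqref{des u}; since it is common to the sub- and the super-solution it cancels out of the Lipschitz estimate above, and Theorem \ref{CompPrin} applies with $G(x,\xi):=g(x,|\xi|)$ and datum $\sigma\nu$ directly if one reads it with a common additive datum (as in its original reference). To match its stated form $G:\Omega\times\R^N\to\R^{+}$ verbatim, I would instead pass to the shifted functions $\tilde w_i := u_i - \mathbb{G}_\alpha[\sigma\nu]$: since $\mathbb{G}_\alpha[\sigma\nu]\ge 0$ solves $(-\Delta)^\alpha(\cdot)=\sigma\nu$ in $\Omega$ with zero exterior datum, the $\tilde w_i$ satisfy $(-\Delta)^\alpha\tilde w_1\le\widetilde G(x,\nabla\tilde w_1)$, $(-\Delta)^\alpha\tilde w_2\ge\widetilde G(x,\nabla\tilde w_2)$ and $\tilde w_1\le\tilde w_2$ on $\R^N\setminus\Omega$, with $\widetilde G(x,\xi):=g\bigl(x,|\xi+\nabla\mathbb{G}_\alpha[\sigma\nu](x)|\bigr)$, which still obeys the same Lipschitz bound (an additive shift of the argument being harmless for the reverse triangle inequality). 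An application of Theorem \ref{CompPrin} to the pair $\tilde w_1,\tilde w_2$ then yields $\tilde w_2\ge\tilde w_1$, hence $u_2\ge u_1$ in $\Omega$.

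I expect the bulk of the work to be this bookkeeping of hypotheses. The one point deserving genuine care is the treatment of the measure term: if the shift $\tilde w_i$ is used, one must check that $\tilde w_1,\tilde w_2$ remain in the admissible class of Theorem \ref{CompPrin} — in particular that positivity survives the subtraction (this is where the sign condition $\nu\ge 0$ enters, or one simply restricts to the nonnegative solutions produced by the existence theory) and that $\mathbb{G}_\alpha[\sigma\nu]$ has gradient regularity up to $W^{1,q}(\Omega)$ for all $q<p^{\ast}$, which is a consequence of the Green–gradient estimates recalled in the Introduction. It is also worth noting that the prescribed range $1\le p<p^{\ast}$ is exactly what places $u_1,u_2$ in the regime where $g(x,|\nabla u_i|)\in L^1(\Omega)$ and \eqref{reg union} hold, so that the whole argument is genuinely within the scope of Theorem \ref{CompPrin}.
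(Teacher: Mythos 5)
Your proposal is correct and follows essentially the route the paper intends: the paper records this corollary without proof as a ``straightforward consequence'' of Theorem \ref{CompPrin}, i.e.\ one applies that theorem with $G(x,\xi):=g(x,|\xi|)$, for which hypothesis \eqref{assump g} is verbatim the required Lipschitz condition (so your reverse-triangle-inequality step is superfluous), and absorbs the common datum $\sigma\nu$ exactly as you do via the shift by $\mathbb{G}_\alpha[\sigma\nu]$. Your explicit treatment of that measure term, and your flagging of the positivity and $W^{1,q}$-regularity of the shifted functions, is more careful than what the paper writes down.
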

\begin{remark}
We point out that in Theorems \ref{superlinear p bounded} and \ref{MThm2}, we shall prove that solutions $u$ to \eqref{maineq} satisfy $\displaystyle u \in  W^{1, q}(\Omega)$ for all $1 \leq q < p^{*}$.
\end{remark}As an illustrative application of Corollary \ref{Uniqueness}, we may prove uniqueness for  \eqref{maineq} with:
\begin{equation}\label{g for uniq}
 g(x,|\nabla u|)= c(x)|\nabla u|^p+ \varepsilon|f(x)|, \qquad 0\leq c \in L^{\infty}(\Omega), \,\,f \in L^{1}(\Omega) \quad \text{and}\quad \varepsilon>0.
\end{equation}First, $g$ satisfies \eqref{assump g} for  $u_1$ and $u_2$ verifying \eqref{reg union} and \eqref{des u}. Indeed, since for all $\xi_1, \xi_2 \in \R^N$ and for all $p\geq 1$ we have
$$
\begin{array}{rcl}
  |\xi_1|^p-|\xi_2|^p & \leq & p|\xi_1|^{p-2}\langle \xi_1, \xi_1-\xi_2\rangle \\
   & \leq & p|\xi_1|^{p-1}|\xi_1-\xi_2|,
\end{array}
$$we get in the weak sense:
$$
\begin{array}{rcl}
  g(x,|\nabla u_1|)-g(x,|\nabla u_2|)  & \leq &  p\|c\|_{L^{\infty}(\Omega)}|\nabla u_1|^{p-1}|\nabla (u_1-u_2)|\\  &  = & b(x)|\nabla (u_1-u_2) |.
\end{array}
$$
Since $p<p^{\ast}, p^{\prime}>{p^{\ast}}^{\prime}$, $b$ falls
into the hypothesis of Theorem \ref{CompPrin} and $g$ satisfies \eqref{assump g}. Next, suppose that $u_1$ and $u_2$ are two solutions of \eqref{maineq} with $g$ as in \eqref{g for uniq} so that \eqref{reg union} holds.  Hence, by Corollary \ref{Uniqueness}, $u_1=u_2$.

\begin{remark}It is worthwhile mentioning that, according to our knowledge, uniqueness of solutions  for $p\in (0, 1)$, general sources and $g$ as an absorption or source term  is an  open problem, even in the local case driven by the Laplacian operator (see comments after Theorem 1.1 in \cite{NV} and in \cite{CV}).
\end{remark}

Regarding existence, in Section \ref{subcritical} we shall prove that there
exist weak solutions to \eqref{maineq} under polynomial
growth conditions on the non-linearity $g$ and having the
following representation formula:
\begin{equation}\label{decomposition9}
u(x)=\mathbb{G}_\alpha[g(x, |\nabla u|)+ \sigma \nu](x)+\varrho \mathbb{P}_\alpha[\mu](x)
\end{equation}
where for $x \in \Omega$:
$$
   \mathbb{G}_\alpha[g(x, |\nabla u|)+ \sigma \nu](x)=\int_\Omega g(x, |\nabla u(y)|)G_\alpha(x, y)dy+\sigma\int_\Omega G_\alpha(x, y)d\nu(y)
$$
represents the ``Green potential'' and
$$
   \mathbb{P}_\alpha[\mu](x)\defeq -\int_{\mathbb{R}^{N}\setminus \Omega} (\mathcal{N}_\alpha G_\alpha(x, \cdot))(y)d\mu(y)
$$
represents the ``Poisson potential'' (compare with \cite[Section 2.4]{Ve4} in the local scenario).

Finally, we must point out that the expression $\varrho\mathbb{P}_{\alpha}[\mu]$ in the decomposition \eqref{decomposition9} plays the role of Poisson operator in the nonlocal setting. In this regard, it is interesting to compare the  expression \eqref{decomposition9} with related results such as \cite[Proposition 1.1.3 and Theorem 1.2.2]{Marcus-V} and \cite[Proposition A]{NV}.

Continuing, let us discuss the expression \eqref{decomposition9}. Formally, the function $v=u-\varrho \mathbb{P}_\alpha[\mu]$ in \eqref{decomposition9} satisfies
\begin{equation}\label{diri}
\begin{cases}
(-\Delta)^\alpha v=g(x, |\nabla( v+\varrho\mathbb{P}_{\alpha}[\mu])|) + \sigma \nu &  \text{in } \Omega,\\
\qquad \quad v=0 &  \text{in } \R^N\setminus\Omega
\end{cases}
\end{equation}
in the sense of Definition \ref{weak solu1}. The existence of $v$
will be the main topic of Section \ref{subcritical}. On the other
hand, $w=\mathbb{P}_\alpha[\mu]$ solves

\begin{equation}\label{poisson}
\begin{cases}
(-\Delta)^\alpha w=0 &  \text{in } \Omega,\\
\qquad \quad w=\mu &  \text{in } \R^N\setminus\Omega
\end{cases}
\end{equation}
in the sense of Definition \eqref{weak solu2}. To see this, we introduce the auxiliary function:
$$
w_\mu(x)= C_{N, \alpha}\int_{\R^N \setminus\Omega}
\frac{1}{|z-x|^{N+2\alpha}}\,d\mu(z), \qquad x \in \Omega
$$
so that
\begin{equation}\label{omega-mu}
\int_\Omega w_\mu \phi\, dx = -\int_{\mathbb{R}^{N}\setminus
\Omega}\mathcal{N}_\alpha \phi d\mu, \quad \textnormal{ for any
$\phi \in \mathbb{X}_\alpha(\Omega)$},
\end{equation}
and hence
\begin{equation}\label{111}
\mathbb{P}_\alpha[\mu](x)=\int_\Omega w_\mu(z)G_\alpha(x, z)dz=\mathbb{G}_\alpha[w_\mu](x).
\end{equation}
In view of our assumptions on $\mu$ and \cite[Lemma 5.2]{VC1}, it holds $w_\mu \in C^{1}(\overline{\Omega})$. Consequently, by \cite[Proposition 2.4]{CV} and \eqref{111}, we have
 \begin{equation}\label{REG p}
 \mathbb{P}_\alpha[\mu] \in \bigcup_{1 \leq q < p^{\ast}}W_0^{1, q}(\Omega).
 \end{equation}Moreover, since
\begin{equation}\label{property omega}
\int_\Omega \mathbb{P}_\alpha[\mu] (-\Delta)^{\alpha} \phi dx = \int_\Omega \phi w_\mu dx = -\int_{\mathbb{R}^{N}\setminus \Omega}\mathcal{N}_\alpha \phi d\mu \quad \textnormal{for all $\phi \in \mathbb{X}_\alpha(\Omega)$}
\end{equation}we have that   $\mathbb{P}_\alpha[\mu]$ solves \eqref{poisson} in the sense of Definition \eqref{weak solu2}. Consequently, assuming for the moment that $v$ solves \eqref{diri}, the function $u$ as in \eqref{decomposition9} is indeed a solution of problem \eqref{maineq}:
\begin{equation*}
\begin{split}
\int_\Omega u (-\Delta)^{\alpha}\phi dx &= \int_\Omega v  (-\Delta)^{\alpha}\phi dx + \varrho \int_\Omega \mathbb{P}_\alpha[w_\mu] (-\Delta)^{\alpha}\phi dx \\ & =  \int_\Omega g(x, |\nabla u|)\phi dx + \sigma\int_\Omega \phi d\nu + \varrho \int_\Omega w_\mu \phi dx\\ & =   \int_\Omega g(x, |\nabla u|)\phi dx+ \sigma\int_\Omega \phi d\nu - \varrho \int_{\mathbb{R}^{N}\setminus \Omega}\mathcal{N}_\alpha \phi d\mu, \quad \textnormal{for all $\phi \in \mathbb{X}_\alpha(\Omega)$,}
\end{split}
\end{equation*}
where we have used \eqref{111} and \eqref{omega-mu} in the latter two equalities. Therefore, it remains to prove that problem \eqref{diri} admits a solution.

\section{Main results: Sub-critical case $0< p < \frac{N}{N-(2\alpha-1)}$}\label{subcritical}

In this section, we prove existence of weak solution to the elliptic integral-differential problem \eqref{maineq} under a $p-$polynomial growth condition on $g$.  As it was discussed in the previous section, it is enough to solve problem \eqref{diri}.

We assume throughout the section that:
$$
  p<p^{\ast}\defeq \frac{N}{N-(2\alpha-1)}.
$$
Furthermore, we will divide the exposition in two sub-cases:
\begin{enumerate}
  \item Super-linear case, i.e., $p>1$;
  \item Sub-linear case, i.e., $p\leq 1$.
\end{enumerate}

\subsection{Super-linear case}

Firstly, we will treat the super-linear setting. We provide existence result under appropriated growth/integrability conditions on the gradient source term. Moreover, such solutions fulfils a certain explicit characterization.

\begin{teo}\label{superlinear p bounded}
Suppose that $g$ satisfies the following growth hypothesis:
\begin{equation}\label{H g}
 g(x, s)\leq c s^{p}+\varepsilon|f(x)|, \quad s \geq 0,\,\varepsilon \geq 0,
\end{equation}
where $f \in L^{1}(\Omega)$  and  $1\leq p < p^{\ast}$. Then,  for all small enough $c$,  problem \eqref{maineq} admits a non-negative weak solution $u$  which fulfils the decomposition \eqref{decomposition9} and satisfies:
$$u \in \bigcup_{1 \leq q < p^{\ast}} W^{1, q}(\Omega).$$
\end{teo}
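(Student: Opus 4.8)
The plan is to recast the problem \eqref{diri} as a fixed-point problem for the map $T$ that sends a vector field — or rather a function $w$ with controlled gradient — to the solution of the linear problem $(-\Delta)^\alpha v = g(x,|\nabla(w+\varrho\mathbb P_\alpha[\mu])|)+\sigma\nu$ in $\Omega$, $v=0$ outside $\Omega$, and then apply Schauder's fixed point theorem. Concretely, I would work in the space $E = W_0^{1,q}(\Omega)$ for a fixed $q$ with $1<q<p^\ast$ (or, following the Green-estimate philosophy of the introduction, in the weighted space of functions $v$ with $|\nabla v|\,\mathrm d^{1-\alpha}\in L^q(\Omega)$), and on a closed ball $\overline{B}_R \subset E$. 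Given $v\in \overline B_R$, set $h_v(x) \defeq g(x,|\nabla(v+\varrho\mathbb P_\alpha[\mu])|) + \sigma\nu$; since $g(x,s)\le cs^p+\varepsilon|f(x)|$ and $p<p^\ast$, and since $\mathbb P_\alpha[\mu]\in W_0^{1,q}(\Omega)$ by \eqref{REG p}, Hölder's inequality together with $|\nabla v|\in L^q$ shows $h_v\in L^1(\Omega) + \mathcal M(\Omega)$ with total mass controlled by $c(R+\|\mathbb P_\alpha[\mu]\|)^p + \varepsilon\|f\|_{L^1} + \sigma\|\nu\|$. Define $Tv \defeq \mathbb G_\alpha[h_v]$.

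The key analytic input is the mapping property of the Green operator $\mathbb G_\alpha$: by the gradient estimate $|\nabla_x G_\alpha(x,y)|\le C_3 |x-y|^{-(N-(2\alpha-1))}\mathrm d(x)^{\alpha-1}$ recalled in the introduction (from \cite[Lemma 2.4]{AP20}), together with \cite[Lemma 2.12]{AP20} / \cite[Proposition 2.4]{CV}, one has $\mathbb G_\alpha: \mathcal M(\Omega) \to W_0^{1,q}(\Omega)$ bounded for every $q<p^\ast$, with a norm bound of the form $\|\mathbb G_\alpha[\lambda]\|_{W_0^{1,q}}\le C_q \|\lambda\|_{\mathcal M(\Omega)}$. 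Hence $\|Tv\|_{W_0^{1,q}} \le C_q\big(c(R+C_\ast)^p + \varepsilon\|f\|_{L^1} + \sigma\|\nu\|\big)$ where $C_\ast = \varrho\|\mathbb P_\alpha[\mu]\|$. Choosing $R$ large and then $c$ small, the right-hand side is $\le R$ when $p>1$ (the superlinearity in $R$ is absorbed precisely because $c$ is small — this is where the smallness of $c$ in the statement is used, via a standard "the parabola $c(R+C_\ast)^p + b \le R$ has a solution for small $c$" argument), so $T(\overline B_R)\subset \overline B_R$. Compactness of $T$ follows because $\mathbb G_\alpha$ actually maps into $W_0^{1,q}$ with a gain — either by interpolating two different exponents $q_1<q<q_2<p^\ast$ and using Rellich, or directly from the fact that the Green potential of a bounded family of measures is relatively compact in $W_0^{1,q}(\Omega)$ for $q<p^\ast$ (an Arzelà–Ascoli / Fréchet–Kolmogorov argument on the kernel). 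Continuity of $T$ on $\overline B_R$ uses that $v\mapsto \nabla(v+\varrho\mathbb P_\alpha[\mu])$ is continuous $E\to L^q$, that $g$ is continuous hence $s\mapsto g(x,s)$ composed with convergent gradients converges a.e., and that the growth bound $g(x,s)\le cs^p + \varepsilon|f|$ with $p<q$ (after possibly shrinking to $q$ slightly above $p$) gives equi-integrability and thus $L^1$-convergence of $h_{v_n}\to h_v$, whence $Tv_n\to Tv$ by boundedness of $\mathbb G_\alpha$ on $L^1\hookrightarrow\mathcal M$.

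Schauder then yields a fixed point $v = Tv = \mathbb G_\alpha[h_v]$, i.e. $v$ solves \eqref{diri} in the sense of Definition \ref{weak solu1} — the weak formulation against $\phi\in\mathbb X_\alpha(\Omega)$ follows from the defining property of $G_\alpha$ together with the integration-by-parts computation \eqref{nonlocal3}, exactly as in the verification at the end of Section \ref{prelim}. Setting $u = v + \varrho\mathbb P_\alpha[\mu]$ gives the decomposition \eqref{decomposition9} and, by \eqref{REG p} and $v\in W_0^{1,q}(\Omega)$ for all $q<p^\ast$, the claimed regularity $u\in\bigcup_{1\le q<p^\ast}W^{1,q}(\Omega)$. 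Non-negativity of $u$ is inherited from non-negativity of $G_\alpha$, of $g\ge 0$, of $\sigma\nu\ge 0$, and of $\varrho\mathbb P_\alpha[\mu]$ (the Poisson operator preserves the sign of $\mu\ge 0$). The main obstacle, I expect, is making the fixed-point set-up genuinely self-consistent: one needs $h_v$ to lie in a space on which $\mathbb G_\alpha$ is both bounded \emph{and} compact into the \emph{same} space $E$ in which the ball $\overline B_R$ lives, and simultaneously needs the nonlinear substitution operator $v\mapsto g(\cdot,|\nabla(v+\varrho\mathbb P_\alpha[\mu])|)$ to be continuous and to send $\overline B_R$ into a bounded set of that space with a bound that is superlinear in $R$ but with a small coefficient — reconciling the exponent $p<p^\ast$ with the functional-analytic exponent $q<p^\ast$ requires choosing $q$ carefully (just above $\max\{p,1\}$) and tracking the weights $\mathrm d^{1-\alpha}$ through Hölder, which is the delicate bookkeeping of the argument.
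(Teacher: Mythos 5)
Your proposal is correct and rests on the same core argument as the paper: reduce to \eqref{diri} via the decomposition $u=v+\varrho\mathbb{P}_\alpha[\mu]$, run Schauder's fixed point theorem on a ball defined by a gradient-norm bound, obtain invariance of the ball from the smallness of $c$ (exactly the ``$c(R+C_\ast)^p+b\le R$'' root-finding you describe, which is the paper's auxiliary function $\mathrm{F}$), and get compactness and the a priori $W^{1,q}$-bounds from the mapping properties of $\mathbb{G}_\alpha$ in \cite[Propositions 2.3--2.4]{CV}. The one structural difference is that the paper does not apply Schauder directly to the original nonlinearity: it first replaces $g$ by bounded truncations $g_n$ and $\nu$ by smooth $\nu_n$, solves each approximate problem \eqref{dirign} by a fixed point in the ball $\mathcal{G}_{\lambda^\ast}\subset W_0^{1,p}(\Omega)$, and only then passes to the limit $n\to\infty$ using uniform integrability and Vitali's theorem; this regularization is what lets the authors invoke the classical solvability and $C^{s}\cap W^{1,q}$ regularity of the linear problem with bounded data (\cite[Proposition 1.1]{RS}). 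In your one-shot version the same Vitali/equi-integrability argument migrates into the proof of continuity of $T$, which you correctly anticipate by taking $q$ strictly above $p$. The only point to tighten is the compactness claim: boundedness in $W_0^{1,q_2}$ plus Rellich does not by itself give strong $L^{q}$-convergence of gradients, so you should lean on your second option, namely the compactness of $h\mapsto\nabla\mathbb{G}_\alpha[h]$ from $L^1(\Omega)$ (or $\mathcal{M}(\Omega)$) into $L^{q}(\Omega)$ for $q<p^{\ast}$, which is precisely the tool the paper uses.
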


\begin{proof}[Proof of Theorem \ref{superlinear p bounded}]
Firstly, we will approximate the nonlinearity $g$ and Radon measure $\nu$ by regular sequences $\{g_n\}_{n \in \mathbb{N}}$ and $\{\nu_n\}_{n \in \mathbb{N}}$ respectively. For that end, consider sequences of non-negative functions $\nu_n \in C^{1}(\overline{\Omega})$ and $ g_n\in C^{1}(\Omega, [0, +\infty))$  such that
\begin{equation}\label{conv measures nu}
\lim_{n\to \infty}\int_{\overline{\Omega}}\xi \nu_n dx=\int_{\overline{\Omega}}\xi d\nu \qquad \textnormal{for all $\xi \in C^0(\overline{\Omega})$}
\end{equation}
and
\begin{enumerate}
\item $g_n(x, 0)=g(x, 0)$ for every $x \in \Omega$;
\item $g_n\leq g_{n+1}\leq g$ and $\sup g_n(x, s)=n$;
\item $\|g_n-g\|_{L^\infty_{loc}(\Omega \times \R_+)} \to 0$ as $n \to \infty$.
\end{enumerate}
By \eqref{conv measures nu}, we have for all $n \gg 1$ (large enough)
\begin{equation}\label{uniform bd of nu_n}
\sup_{n}\|\nu_n\|_{L^{1}(\Omega)}=\sup_{n}\int_{\overline{\Omega}}d\nu_n  \leq C_0
\end{equation}
where  $C_0 \defeq  ||\nu ||_{\mathcal{M}(\Omega)}+1.$ To solve \eqref{diri}, we first find approximations by solving the problems
\begin{equation}\label{dirign}
\begin{cases}
(-\Delta)^\alpha u=g_n(x, |\nabla( u+\varrho\mathbb{P}_{\alpha}[\mu])|) +\sigma \nu_n&  \text{in } \Omega,\\
\qquad \quad u=0 &  \text{in } \R^n\setminus\Omega;
\end{cases}
\end{equation}
By fixed-point methods, we shall prove that \eqref{dirign} admits a non-negative solution $v_n$ such that
$$
   \|\nabla v_n\|_{L^{p}(\Omega)}\leq\lambda^{\ast}
$$
 uniformly for some $\lambda^{\ast}>0$ (to be determined \textit{a posteriori}). For this purpose, we define the closed, convex and bounded sets
$$
  \mathcal{G}_{\lambda}=\{u\in
W_0^{1,p}(\Omega): \|\nabla u\|_{L^{p}(\Omega)}\leq\lambda\}
$$
and operators $T_n$ on $\mathcal{G}_{\lambda}$ as follows: for each $v \in \mathcal{G}_{\lambda}$, let $v_n=T_n(v)$ be the weak solution to
\begin{equation}\label{dirign1}
\begin{cases}
(-\Delta)^\alpha w=g_n(x, |\nabla( v+\varrho\mathbb{P}_{\alpha}[\mu])|) +\sigma \nu_n&  \text{in } \Omega,\\
\qquad \quad  w=0 &  \text{in } \R^n\setminus\Omega.
\end{cases}
\end{equation}Observe that the following representation holds:
$$v_n(x)=\int_\Omega G_\alpha(y, x)[g_n(y, |\nabla( v+\varrho\mathbb{P}_{\alpha}[\mu])|) +\sigma \nu_n]\,dy.$$ See for instance \cite[pag. 7]{AP20}.  Moreover, $v_n \in \mathcal{C}^{s}(\mathbb{R}^{n}) \cap W^{1, q}(\Omega)$ for all $q \in [1, p^{\ast})$ by \cite[Proposition 1.1]{RS}  and \cite[Proposition 2.3]{CV}.  Hence, $v \in W_0^{1, q}$ for all $q \in [1, p^{\ast})$.

We will check that $T_n(\mathcal{G}_{\lambda^{\ast}}) \subset \mathcal{G}_{\lambda^{\ast}}$ for all $n \geq 1$. Recalling  \cite[Proposition 2.4]{CV} there exists  $c_0 >0$ so that
\begin{equation}\label{main eq}
\begin{split}
||\nabla T_n (v) \|_{L^{p}(\Omega)}&\leq c_0\|g(x, |\nabla(
v+\varrho\mathbb{P}_{\alpha}[\mu])|)+\sigma\nu_n\|_{L^1(\Omega)}\\
&\leq c_0\left(c\int_{\Omega}|\nabla(
v+\varrho\mathbb{P}_{\alpha}[\mu])|^p dx+\varepsilon \|f\|_{L^{1}(\Omega)} +\sigma C_0\right)\\
&\leq c_0\left(c2^{p-1}{\lambda^{\ast}}^{p}+
c2^{p-1}\varrho^{p}||\nabla(\mathbb{P}_{\alpha}[\mu])||_{L^{p}(\Omega)}^p+\varepsilon \|f\|_{L^{1}(\Omega)} +\sigma C_0 \right).
\end{split}
\end{equation}

Let us consider the auxiliary function:
$$
  \mathrm{F}(\lambda)=c_0\left(c2^{p-1}\lambda^{p-1}+
\frac{c2^{p-1}\varrho^{p}||\nabla(\mathbb{P}_{\alpha}[\mu])||_{L^{p}(\Omega)}^p+\varepsilon \|f\|_{L^{1}(\Omega)}  +\sigma C_0  }{\lambda}\right)-1.
$$
Now, choose $\lambda >0$ such that
\begin{equation}\label{ineqq}
\frac{\varepsilon\|f\|_{L^{1}(\Omega)}  +\sigma C_0 }{\lambda}=\frac{1}{2c_0},
\end{equation}
and take $c>0$ in \eqref{H g} small enough such that
\begin{equation}\label{ineqqq}
c2^{p-1}\left(\lambda^{p-1}+\frac{\varrho^{p}||\nabla(\mathbb{P}_{\alpha}[\mu])||_{L^{p}(\Omega)}^p}{\lambda}\right) <\frac{1}{2c_0}.
\end{equation}
Hence, $\mathrm{F}(\lambda)<0$ for such a $\lambda$. Also, observe that $\mathrm{F}(\lambda)>0$ for small enough $\lambda$. Thus, there exists $\lambda^{\ast}>0$ so that $\mathrm{F}(\lambda^{\ast})=0$. Choosing $\lambda^{\ast}$ in $\mathcal{G}_{\lambda}$, the inequalities \eqref{main eq} imply that $T_n(v)\in \mathcal{G}_{\lambda^{\ast}}$ for all $n$. This shows that $T_n$ maps $\mathcal{G}_{\lambda^{\ast}}$ into itself.

 Clearly, if $u_k \to u$ in $W^{1, p}(\Omega)$ as $k \to \infty$, then $g_n(x, \nabla (u_k+\rho\mathbb{P}_{\alpha}[\mu])) \to g_n(x, \nabla (u+\rho\mathbb{P}_{\alpha}[\mu]))$ in $L^1(\Omega)$ as $n \to \infty$. Hence, $T_n$ is a continuous map. We prove now that $T_n$ is a compact operator. For each $n$, let $v_{n, k}$ be a sequence in $\mathcal{G}_{\lambda^{\ast}}$. By definition of $\mathcal{G}_{\lambda^{\ast}}$ and Poincar\'e inequality, $v_{n, k}$ is bounded in $k$ in $W_0^{1, p}(\Omega)$. Observe that
 $$
   g_n(x, \nabla( v_{n, k}+\varrho\mathbb{P}_\alpha[\mu]))+\sigma \nu_n
 $$
 is bounded (in $k$) in $L^{1}(\Omega)$ and hence by the compactness of the operator
 $$
    L^{1}(\Omega)\ni h \to \nabla \mathbb{G}_\alpha[h]\in L^{p}(\Omega),
 $$
 there is a converging subsequence of $\nabla T_n(v_{n, k})$. We conclude that $T_n(v_{n, k})$ admits a converging subsequence in $W_0^{1, p}(\Omega)$. From Schauder's fixed point Theorem,  there exists $v_n\in \mathcal{G}_{\lambda^{\ast}}$ such that $T_n(v_n)=v_n$ and
$\|\nabla v_n\|_{L^{p}(\Omega)}\leq\lambda^{\ast}$. It remains to prove that $v_n \to v$, where $v$ solves \eqref{diri}.

Since $v_n=T(v_n)$, we have

\begin{equation}\label{bf lim}
\begin{split}\int_{\Omega}v_n (-\Delta)^\alpha\psi\,dx&=\int_\Omega g_n(x, |\nabla(
v_n+\varrho\mathbb{P}_{\alpha}[\mu])|)\psi\,dx+\sigma \int_{\Omega}\psi d\nu_n, \,\,\mbox{ for all
}\psi\in\mathbb{X}_{\alpha}(\Omega).\\&
\end{split}
\end{equation}
Hence,
\begin{equation}\label{u sub n}
v_n=\mathbb{G}_\alpha[g_n(x, |\nabla (v_n+\varrho\mathbb{P}_{\alpha}[\mu]) |)+\sigma \nu_n]
\quad \textnormal{ in } \Omega.
\end{equation}
Moreover, assumption \eqref{H g} and $$\|\nabla (v_n+\varrho\mathbb{P}_{\alpha}[\mu]) \|_{L^{p}(\Omega)}\leq\lambda^{\ast} + \varrho \|\nabla\mathbb{P}_\alpha[\mu]\|_{L^{p}(\Omega)}$$yield that   $g_n(x, |\nabla(v_n+\varrho\mathbb{P}_{\alpha}[\mu])|)$ is uniformly bounded in $L^{1}(\Omega)$. From \eqref{u sub n} and  the compactness of the operator $$L^{1}(\Omega) \ni h \to (\mathbb{G}_\alpha[h], \nabla \mathbb{G}_\alpha[h])\in L^{p}(\Omega)\times L^{p}(\Omega)$$it follows the strong convergence, up to subsequence, of $v_n$ in  $W^{1, p}_0(\Omega)$ to some $v \in W_0^{1, p}(\Omega)$. Thus $\nabla v_n$ converges point-wisely to $\nabla v$, and  so
$$
   g_n(x, |\nabla (v_n+\varrho \mathbb{P}_\alpha[\mu])|) \to g(x, |\nabla (v+\varrho \mathbb{P}_\alpha[\mu])|) \quad \text{a.e. in} \quad \Omega.
$$

In the sequel, we prove that $g_n(x, |\nabla (v_n+\varrho \mathbb{P}_\alpha[\mu])|)$ is uniformly integrable. For that end, observe that for any Borel subset $E\subset \Omega$
\begin{equation}\label{eq 12}
\begin{split}
\int_E g_n(x, |\nabla (v_n+\varrho \mathbb{P}_\alpha[\mu])|)dx &\leq
c \int_E |\nabla (v_n+\varrho \mathbb{P}_\alpha[\mu])|^{p}dx +
\varepsilon\|f\|_{L^{1}(E)} \\& \leq c2^{p-1}||\nabla v_n-\nabla
v||_{L^{p}(E)}^{p}  + c2^{p-1}||\nabla( v+\varrho
\mathbb{P}_\alpha[\mu]) \|_{L^{p}(E)}^{p} \\&+ \varepsilon\|f\|_{L^{1}(E)}.
\end{split}
\end{equation}
Let $\hat{\eta} > 0$ be arbitrary. Then there are $N_0, \delta_0 > 0$ so that $n\geq N_0$ implies
\begin{equation}\label{eq 123}
c2^{p-1}||\nabla v_n-\nabla v||_{L^{p}(\Omega)}^{p} < \frac{\hat{\eta}}{3},
\end{equation}
and $|E|< \delta_0$ gives
\begin{equation}\label{eq 1234}
\max\left\{\epsilon \|f\|_{L^{1}(E)},\,\, c2^{p-1}||\nabla (v+\varrho \mathbb{P}_\alpha[\mu]) \|_{L^{p}(E)}^{p}\right\} <\frac{\hat{\eta}}{3}.
\end{equation}
In addition, for each $n \in \left\lbrace 1, ..., N_0\right\rbrace$, there is $\delta_n > 0$ so that $|E|<\delta_n$ implies:
\begin{equation}\label{eq 12345}
c2^{p-1}\|\nabla v_n-\nabla v||_{L^{p}(E)}^{p}  < \frac{\hat{\eta}}{3}.
\end{equation}
Choose  $\delta \defeq \min\left\lbrace \delta_0, \delta_1, ..., \delta_{N_0} \right\rbrace$ and $|E|< \delta$. Hence, plugging \eqref{eq 123}, \eqref{eq 1234} and \eqref{eq 12345} into \eqref{eq 12} gives:
$$
  \int_E g_n(x, |\nabla (v_n+\varrho \mathbb{P}_\alpha[\mu])|)dx < \hat{\eta} \quad \textnormal{ for all n}.
$$
Thus $g_n(x, |\nabla (v_n+\varrho \mathbb{P}_\alpha[\mu])|)$ is uniformly integrable. By Vitali's convergence theorem we obtain
$$
  g_n(x, |\nabla (v_n+\varrho \mathbb{P}_\alpha[\mu])|) \to g(x, |\nabla (v+\varrho \mathbb{P}_\alpha[\mu])|) \quad\textnormal{ in }L^{1}(\Omega).
$$
Therefore, taking $n \to \infty$ in \eqref{bf lim} it holds:
$$
   \int_{\Omega}v(-\Delta)^\alpha\psi\,dx=\int_\Omega g(x, |\nabla
(v+\varrho \mathbb{P}_\alpha[\mu])|)\psi\,dx + \sigma\int_\Omega \psi d\nu.
$$
Hence, $v$ is a weak solution of problem \eqref{diri}. Finally, by writing
$$
   u=v+\varrho \mathbb{P}_\alpha[\mu]
$$
we obtain a solution to \eqref{maineq}.
\end{proof}

\begin{remark}Observe that to derive the existence of a positive root for $\mathrm{F}$ in the above proof, one may ask for $\varrho$, $\epsilon$ and $\sigma$ to be sufficiently small instead of the imposed condition on the size of $c$.
\end{remark}

\subsection{Sub-linear case}\label{SecSub-Linear}

In the sequel, we will deal with the sub-linear scenario. Similarly to the previous section, we provide existence of weak solutions under appropriated growth/integrability conditions on the gradient source term.

\begin{teo}\label{MThm2}
Suppose that $g$ satisfies:
\begin{equation}\label{H g1}
 g(x, s)\leq c s^{p}+\varepsilon |f(x)|, \quad  x\in \Omega, s \geq 0\,\,\epsilon \geq 0,
\end{equation}where $f$ is integrable and  $0\leq p\leq 1$. Then,  for all small enough $c$,  problem \eqref{maineq} admits a non-negative weak solution $u$  which fulfils the decomposition \eqref{decomposition9} and satisfies:
$$u \in \bigcup_{1 \leq q < p^{\ast}} W^{1, q}(\Omega).$$
\end{teo}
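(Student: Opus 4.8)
The plan is to mimic the proof of Theorem \ref{superlinear p bounded} almost verbatim; the only genuine adjustments are that $t\mapsto t^{p}$ is now subadditive rather than convex, and that $W_0^{1, p}(\Omega)$ is no longer a Banach (or even locally convex) space when $p<1$, so Schauder's theorem cannot be applied there. Accordingly, I would fix once and for all an exponent $q_0\in[1, p^{\ast})$ (for instance $q_0=1$, which already works since $p^{\ast}>1$) and run the whole fixed-point scheme in $W_0^{1, q_0}(\Omega)$, using that the gradient estimate and the compactness of the Green operator from \cite[Proposition 2.4]{CV} hold for every exponent strictly below $p^{\ast}$, in particular for $q_0$ and for $p$ itself. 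One first replaces $g$ by an increasing approximating sequence $g_n\in C^{1}(\Omega,[0,\infty))$ and $\nu$ by $\nu_n\in C^{1}(\overline{\Omega})$ exactly as in \eqref{conv measures nu}--\eqref{uniform bd of nu_n}, introduces the approximate problems \eqref{dirign} and the solution maps $T_n$ on $\mathcal{G}_{\lambda}=\{u\in W_0^{1, q_0}(\Omega):\|\nabla u\|_{L^{q_0}(\Omega)}\le\lambda\}$ defined through \eqref{dirign1}, and then carries out the three standard sub-steps: (i) a self-map property on a suitable ball $\mathcal{G}_{\lambda^{\ast}}$; (ii) continuity and compactness of $T_n$, followed by Schauder's theorem to produce fixed points $v_n$; (iii) passage to the limit $n\to\infty$.

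Step (i) is where the sub-linear structure enters. Using the elementary subadditivity $(a+b)^{p}\le a^{p}+b^{p}$ for $a,b\ge 0$ in place of the convexity inequality with the constant $2^{p-1}$, together with the H\"older (Jensen) bound $\int_\Omega|\nabla w|^{p}\,dx\le|\Omega|^{1-p/q_0}\|\nabla w\|_{L^{q_0}(\Omega)}^{p}$, the growth hypothesis \eqref{H g1}, the bound \eqref{uniform bd of nu_n}, and the regularity \eqref{REG p} of $\mathbb{P}_\alpha[\mu]$, I expect to reach, for $v\in\mathcal{G}_\lambda$,
\begin{equation*}
\|\nabla T_n(v)\|_{L^{q_0}(\Omega)}\le c_0\Big(c\,|\Omega|^{1-p/q_0}\big(\lambda^{p}+\varrho^{p}\|\nabla\mathbb{P}_\alpha[\mu]\|_{L^{q_0}(\Omega)}^{p}\big)+\varepsilon\|f\|_{L^{1}(\Omega)}+\sigma C_0\Big),
\end{equation*}
which, after dividing by $\lambda$ and subtracting $1$, produces an auxiliary function $\mathrm{F}(\lambda)$ of the same shape as in the proof of Theorem \ref{superlinear p bounded} but with leading term $c_0 c\,|\Omega|^{1-p/q_0}\lambda^{p-1}$. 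Since $p-1\le 0$, this term stays bounded (and tends to $0$ as $\lambda\to\infty$ when $p<1$), while the remaining term blows up as $\lambda\to 0^{+}$; hence $\mathrm{F}$ has a positive root $\lambda^{\ast}$, automatically when $p<1$ and, when $p=1$, under the smallness of $c$ imposed in the statement (the case $p=1$ being in any event contained in Theorem \ref{superlinear p bounded}). This yields $T_n(\mathcal{G}_{\lambda^{\ast}})\subset\mathcal{G}_{\lambda^{\ast}}$ for every $n$.

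Steps (ii) and (iii) then go through with no change: $T_n$ is continuous, and it is compact because $L^{1}(\Omega)\ni h\mapsto\nabla\mathbb{G}_\alpha[h]\in L^{q_0}(\Omega)$ is compact, so Schauder's theorem gives $v_n\in\mathcal{G}_{\lambda^{\ast}}$ with $T_n(v_n)=v_n$; since the data $g_n(x,|\nabla(v_n+\varrho\mathbb{P}_\alpha[\mu])|)+\sigma\nu_n$ remain bounded in $L^{1}(\Omega)$, the compactness of $h\mapsto(\mathbb{G}_\alpha[h],\nabla\mathbb{G}_\alpha[h])$ gives, along a subsequence, $v_n\to v$ strongly in $W_0^{1, q_0}(\Omega)$ and $\nabla v_n\to\nabla v$ a.e.; the uniform integrability of $g_n(x,|\nabla(v_n+\varrho\mathbb{P}_\alpha[\mu])|)$ is obtained exactly as in \eqref{eq 12}--\eqref{eq 12345} with the factor $2^{p-1}$ replaced by $1$; Vitali's theorem then upgrades the convergence to $L^{1}(\Omega)$, and letting $n\to\infty$ in \eqref{bf lim} shows that $v$ solves \eqref{diri}. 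Setting $u=v+\varrho\mathbb{P}_\alpha[\mu]$ yields the solution of \eqref{maineq} with decomposition \eqref{decomposition9}; for the stated regularity I would write $v=\mathbb{G}_\alpha[g(x,|\nabla(v+\varrho\mathbb{P}_\alpha[\mu])|)+\sigma\nu]$, note that $g(x,|\nabla(v+\varrho\mathbb{P}_\alpha[\mu])|)\in L^{1}(\Omega)$ (again by the Jensen bound, since $\nabla(v+\varrho\mathbb{P}_\alpha[\mu])\in L^{q_0}(\Omega)$), so that the argument of $\mathbb{G}_\alpha$ is a bounded Radon measure, and invoke \cite[Propositions 2.3 and 2.4]{CV} together with \eqref{REG p} to conclude $u\in\bigcup_{1\le q<p^{\ast}}W^{1, q}(\Omega)$. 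The only point demanding actual thought, rather than transcription of the super-linear argument, is this choice of functional setting: one cannot use $W_0^{1, p}(\Omega)$ directly for $p<1$ and must pay the H\"older price $|\Omega|^{1-p/q_0}$ to turn the $L^{p}$-growth of $g$ into a bound on $\|\nabla T_n(v)\|_{L^{q_0}(\Omega)}$; the analysis of $\mathrm{F}$ is in fact easier than in Theorem \ref{superlinear p bounded}.
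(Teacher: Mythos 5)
Your proposal is correct and follows essentially the same route as the paper: the paper also runs the Schauder fixed-point scheme in $W_0^{1,1}(\Omega)$ (your $q_0=1$) and passes to the limit exactly as in the super-linear case. The only cosmetic difference is how the sub-linear growth is exploited: you bound $\int_\Omega|\nabla w|^{p}\,dx$ by H\"older/Jensen as $|\Omega|^{1-p}\|\nabla w\|_{L^{1}(\Omega)}^{p}$, whereas the paper splits $\Omega$ into $\{|\nabla w|>1\}$ and its complement to get the bound $\|\nabla w\|_{L^{1}(\Omega)}+|\Omega|$ --- both yield an auxiliary function with a positive root under the stated smallness of $c$.
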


\begin{proof}
We proceed as in the proof of Theorem \ref{superlinear p bounded}. We point out the differences. Consider the sequences $\{g_n\}_{n \in \mathbb{N}}$ and $\{\nu_n\}_{n \in \mathbb{N}}$ as in the super-linear case, and define the operators
$$
  T_n(v)=\mathbb{G}_\alpha[g_n(x, |\nabla( v+\varrho\mathbb{P}_{\alpha}[\mu])|)+\sigma \nu_n]
$$
 for $v$ in the set
$$
\mathcal{O}_{\bar{\lambda}} \defeq \left\lbrace v \in W^{1, 1}_0(\Omega): \|\nabla v\|_{L^1{(\Omega)}}\leq \bar{\lambda} \right\rbrace
$$
for some $\bar{\lambda}>0$ (to be adjusted \textit{a posteriori}). First of all, we show that $T_n$ maps $\mathcal{O}_{\bar{\lambda}}$ into itself. Observe that  \cite[Proposition 2.4]{CV}  and \eqref{H g} yield

\begin{equation}\label{long calcu}
\begin{split}
&\|\nabla T_n(v)\|_{L^{1}(\Omega)} \leq c_0\int_\Omega\left(
c|\nabla (v+\varrho\mathbb{G}_\alpha[w_\mu])|^{p}+\epsilon|f(x)| + \sigma
\nu_n\right)dx\\&\quad \leq c_0\left(\int_{\Sigma} c|\nabla
(v+\varrho\mathbb{P}_\alpha[\mu])|^{p}dx + \int_{(\Sigma)^c} c|\nabla
(v+\varrho\mathbb{P}_\alpha[\mu])|^{p}dx +\sigma\int_\Omega \nu_n
dx +\epsilon\|f\|_{L^{1}(\Omega)}\right) \\& \quad \leq c_0.c.\left( \|\nabla
v\|_{L^{1}(\Omega)} + \varrho\|\nabla
\mathbb{P}_\alpha[\mu]\|_{L^{1}(\Omega)}+|\Omega|+ \frac{\epsilon}{c}\|f\|_{L^{1}(\Omega)}\right)
+c_0.\sigma.\left(\sup_{n}\nu_n(\overline{\Omega})\right)\\ & \quad
\leq c_0.c.\left( \bar{\lambda} +\varrho\|\nabla
\mathbb{P}_\alpha[\mu]\|_{L^{1}(\Omega)}+\frac{\epsilon}{c}\|f\|_{L^{1}(\Omega)}+|\Omega|\right) +c_0.\sigma.\left(\sup_{n}\nu_n(\overline{\Omega})
\right),
\end{split}
\end{equation}
where $\Sigma: = \left\lbrace x \in \Omega: |\nabla (v+\varrho\mathbb{P}_\alpha[\mu])|>1\right\rbrace$. Now, consider:
\begin{equation*}
\mathbb{F}(\lambda)=c_0.c.\left(1+\frac{\varrho\|\nabla \mathbb{P}_\alpha[\mu]\|_{L^{1}(\Omega)}+\epsilon\|f\|_{L^{1}(\Omega)}/c+|\Omega|}{\lambda}\right) +\frac{c_0.\sigma.\left(\sup_{n}\nu_n(\overline{\Omega}) \right) }{\lambda} -1.
\end{equation*}
Choose $c < \frac{1}{c_0}$. Hence for $\lambda \gg 1$ large enough it holds $\mathbb{F}(\lambda)< 0$. Moreover, fixing $c$ as before, we have that  $\mathbb{F}(\lambda)>0$ for $\lambda$ sufficiently closed to $0$. Hence, there is $\bar{\lambda} > 0$ so that $\mathbb{F}(\bar{\lambda})=0$. We take $\bar{\lambda}$ in $\mathcal{O}_{\bar{\lambda}}$ and thus $T_n: \mathcal{O}_{\bar{\lambda}} \to \mathcal{O}_{\bar{\lambda}}$. Moreover, $T_n$ is continuous and compact. Therefore, for each $n$ there is $v_n \in \mathcal{O}_{\bar{\lambda}}$ so that $T_n(v_n)=v_n$ and $||\nabla v_n||_{L^{1}(\Omega)}\leq \bar{\lambda}$. Observe that $g_n(x, |\nabla (v_n+\varrho \mathbb{P}_\alpha[\mu])|)$ is bounded in $L^{1}(\Omega)$. In effect,
\begin{equation*}
\begin{split}
\int_\Omega g_n(x,|\nabla (v_n+\varrho \mathbb{P}_\alpha[\mu])|)dx
&\leq c\left( \int_{\Sigma^n} |\nabla (v_n+\varrho
\mathbb{P}_\alpha[\mu])|^{p} dx + \int_{
(\Sigma^n)^c} |\nabla (v_n+\varrho
\mathbb{P}_\alpha[\mu])|^{p} dx\right)\\ &+ \epsilon \|f\|_ {L^{1}(\Omega)}\\
&  \leq c
\left(\bar{\lambda}+\varrho\|\nabla\mathbb{P}_\alpha[\mu]\|_{L^{1}(\Omega)}\right)+
c|\Omega|+\epsilon\|f\|_{L^{1}(\Omega)},
\end{split}
\end{equation*}
where $\Sigma^n = \left\lbrace x \in \Omega: |\nabla (v_n+\varrho\mathbb{P}_\alpha[\mu])|>1\right\rbrace$. Hence, for all $q \in [1, p^{\ast})$,
$$
  v_n=\mathbb{G}_\alpha[g_n(x, |\nabla (v_n+\varrho \mathbb{P}_\alpha[\mu])|+\sigma\nu_n)]
$$
has a converging subsequence in $W_0^{1, q}(\Omega)$. In particular, there exists $v$ so that $$v \in \bigcup_{1 \leq q < p^{\ast}}W_0^{1, q}(\Omega)$$ and
$$
  g_n(x, |\nabla (v_n+\varrho \mathbb{P}_\alpha[\mu])|) \to g(x, |\nabla (v+\varrho \mathbb{P}_\alpha[\mu])|), \quad \textnormal{a.e. in } \Omega.
$$
To show that $g_n(x, |\nabla (v_n+\varrho \mathbb{P}_\alpha[\mu])|)$ is uniformly integrable we proceed as in the proof of Theorem \ref{superlinear p bounded}. We write \eqref{eq 12} as follows:
\begin{equation*}
\begin{split}
&\int_E g_n(x, |\nabla (v_n+\varrho \mathbb{P}_\alpha[\mu])|)dx \leq \\ &
\qquad \qquad \leq c\left( \int_{\Sigma_E^n} |\nabla (v_n+\varrho
\mathbb{P}_\alpha[\mu])|^{p}dx + \int_{(\Sigma^n_{E})^c} \nabla (v_n+\varrho
\mathbb{P}_\alpha[\mu])|^{p}dx\right) +\epsilon\|f\|_{L^{1}(E)}\\ & \qquad
\qquad \leq c ||\nabla (v_n+\varrho
\mathbb{P}_\alpha[\mu])||_{L^{1}(E)}+c|E|+\epsilon\|f\|_{L^{1}(E)}\\&
\qquad \qquad \leq c \left( ||\nabla v_n-\nabla v||_{L^{1}(E)}+ ||\nabla
(v+\varrho
\mathbb{P}_\alpha[\mu])||_{L^{1}(E)}\right)+c|E|+\epsilon\|f\|_{L^{1}(E)},
\end{split}
\end{equation*}
where $\Sigma_E^n = E \cap \left\lbrace |\nabla (v_n+\varrho
\mathbb{P}_\alpha[\mu])|>1\right\rbrace$. The rest of the proof is the same as in Theorem \ref{superlinear p bounded}.
\end{proof}

\section{Stability results}\label{Stability}

In this section, we prove the stability of solutions to \eqref{maineq} under appropriate perturbations of the measures involved.

\begin{teo}\label{stability}
Assume $1 \leq  p < p^{\ast}$. Consider the problem
\begin{equation}\label{estability}
\begin{cases}
(-\Delta)^\alpha w=g(x, |\nabla w|)+\sigma \nu &  \text{in } \Omega,\\
  \qquad \quad w=\varrho\mu &  \text{in } \R^N\setminus\Omega
\end{cases}
\end{equation}where $g$ satisfies
\begin{equation}\label{asump g
all p} 0 \leq g(x, s)\leq c s^{p}+\epsilon|f(x)|, \quad s, \epsilon
\geq 0, f\in L^{1}(\Omega)
\end{equation}and \eqref{assump g}. Suppose that
$$
\mu_n\rightharpoonup\mu \quad \textnormal{\textit{ in the sense of
duality in} }  C_0(\R^N\setminus\Omega),
$$
and
$$
\nu_n \to \nu \quad \textnormal{\textit{ in the sense of duality in } } C(\overline{\Omega}), \quad \nu_n \in L^{1}(\Omega).
$$
If  $u_n$
is the weak solution of
\begin{equation}\label{maineqsta}
\begin{cases}
(-\Delta)^\alpha v=g(x, |\nabla v|)+\sigma \nu_n &  \text{in } \Omega,\\
  \qquad \quad v=\varrho\mu_n &  \text{in } \R^N\setminus\Omega,
\end{cases}
\end{equation}
then there is $u$ so that
\begin{equation}\label{conv u limit}
 u_n\to u \textnormal{ \textit{strongly in} } W^{1,p}(\Omega), \textnormal{ if $1\leq p < p^{\ast}.$}
\end{equation}Moreover, the limiting profile $u$ solves \eqref{estability}.

\end{teo}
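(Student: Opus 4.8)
The plan is to exploit the decomposition \eqref{decomposition9}: write each solution as $u_n = v_n + \varrho\,\mathbb{P}_\alpha[\mu_n]$, where $v_n \in W_0^{1,p}(\Omega)$ is the weak solution of the Dirichlet problem \eqref{diri} with data $\mu_n,\nu_n$, so that
$$
v_n = \mathbb{G}_\alpha\!\left[\, g\big(x, |\nabla(v_n + \varrho\,\mathbb{P}_\alpha[\mu_n])|\big) + \sigma\nu_n \,\right] \quad \textnormal{in } \Omega .
$$
First I would control the Poisson pieces. Since $w_{\mu_n}(x) = C_{N,\alpha}\int_{\R^N\setminus\Omega}|z-x|^{-(N+2\alpha)}\,d\mu_n(z)$ and $\mu_n \rightharpoonup \mu$ with $\sup_n \mu_n(\R^N\setminus\Omega) < \infty$, the kernel being smooth together with all its derivatives on $\overline{\Omega}\times \mathrm{supp}\,\mu_n$ yields $w_{\mu_n}\to w_\mu$ in $C^1(\overline{\Omega})$, and in particular $\sup_n\|w_{\mu_n}\|_{L^1(\Omega)}<\infty$. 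By \eqref{111} and the continuity of $h\mapsto \nabla\mathbb{G}_\alpha[h]$ from $L^1(\Omega)$ into $L^q(\Omega)$ for every $q<p^{\ast}$ (\cite[Proposition 2.4]{CV}), we obtain $\mathbb{P}_\alpha[\mu_n]\to\mathbb{P}_\alpha[\mu]$ in $\bigcup_{1\le q< p^{\ast}}W_0^{1,q}(\Omega)$, so in particular $\sup_n\|\nabla\mathbb{P}_\alpha[\mu_n]\|_{L^p(\Omega)}<\infty$.

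Next I would rerun the a priori estimate from the proof of Theorem \ref{superlinear p bounded} uniformly in $n$. By \cite[Proposition 2.4]{CV}, \eqref{asump g all p} and $|a+b|^p\le 2^{p-1}(|a|^p+|b|^p)$,
$$
\|\nabla v_n\|_{L^p(\Omega)} \le c_0\Big( c\,2^{p-1}\|\nabla v_n\|_{L^p(\Omega)}^p + c\,2^{p-1}\varrho^p\|\nabla\mathbb{P}_\alpha[\mu_n]\|_{L^p(\Omega)}^p + \varepsilon\|f\|_{L^1(\Omega)} + \sigma\,\sup_n\|\nu_n\|_{L^1(\Omega)}\Big),
$$
and since $\sup_n\|\nu_n\|_{L^1(\Omega)}<\infty$ (because $\nu_n\to\nu$ in duality with $C(\overline{\Omega})$) and $\sup_n\|\nabla\mathbb{P}_\alpha[\mu_n]\|_{L^p(\Omega)}<\infty$, the same root-finding argument for the associated auxiliary function (valid for $c$ small enough) produces a constant $\lambda^{\ast}>0$, independent of $n$, with $\|\nabla v_n\|_{L^p(\Omega)}\le\lambda^{\ast}$. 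Consequently $g(x,|\nabla(v_n+\varrho\,\mathbb{P}_\alpha[\mu_n])|)+\sigma\nu_n$ is bounded in $L^1(\Omega)$, and the compactness of $h\mapsto(\mathbb{G}_\alpha[h],\nabla\mathbb{G}_\alpha[h])$ from $L^1(\Omega)$ into $L^p(\Omega)\times L^p(\Omega)$ gives, along a subsequence, strong convergence $v_n\to v$ in $W_0^{1,p}(\Omega)$ and $\nabla v_n\to\nabla v$ a.e.\ in $\Omega$.

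Finally I would pass to the limit. The a.e.\ convergence of the gradients, together with \eqref{asump g all p} and the strong $L^p$ convergence of $\nabla v_n$ (reproducing the splitting \eqref{eq 12}--\eqref{eq 12345}), shows that $g(x,|\nabla(v_n+\varrho\,\mathbb{P}_\alpha[\mu_n])|)$ is uniformly integrable, hence converges in $L^1(\Omega)$ to $g(x,|\nabla(v+\varrho\,\mathbb{P}_\alpha[\mu])|)$ by Vitali's theorem. Testing the weak formulation of $v_n$ against $\psi\in\mathbb{X}_\alpha(\Omega)$ and letting $n\to\infty$ — also using $\int_\Omega\psi\,d\nu_n\to\int_\Omega\psi\,d\nu$ and, since $\mathcal{N}_\alpha\psi\in C_0(\R^N\setminus\Omega)$, $\int_{\R^N\setminus\Omega}\mathcal{N}_\alpha\psi\,d\mu_n\to\int_{\R^N\setminus\Omega}\mathcal{N}_\alpha\psi\,d\mu$ — shows via \eqref{nonlocal3} that $v$ solves \eqref{diri} with the limiting data, so $u:=v+\varrho\,\mathbb{P}_\alpha[\mu]$ solves \eqref{estability} in the sense of Definition \ref{weak solu2}. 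Because $g$ satisfies \eqref{assump g}, Corollary \ref{Uniqueness} makes $u$ unique, so the convergence holds for the full sequence; combining the strong $W^{1,p}$ convergence of $v_n$ with that of $\mathbb{P}_\alpha[\mu_n]$ gives $u_n\to u$ strongly in $W^{1,p}(\Omega)$, which is \eqref{conv u limit}. The main obstacle is bookkeeping the uniformity in $n$ — above all that the smallness threshold on $c$ and the a priori bound $\lambda^{\ast}$ can be fixed once and for all, independently of the perturbed data — and justifying $\mathcal{N}_\alpha\psi\in C_0(\R^N\setminus\Omega)$ so that the boundary-measure term genuinely passes to the limit.
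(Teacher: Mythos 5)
Your proposal is correct and follows essentially the same route as the paper: decompose $u_n$ into the Green potential of $g(x,|\nabla u_n|)+\sigma\nu_n$ plus the Poisson part $\varrho\,\mathbb{P}_\alpha[\mu_n]$, obtain a uniform-in-$n$ gradient bound $\lambda^{\ast}$ by rerunning the fixed-point estimates of Theorem \ref{superlinear p bounded} with the uniform $L^1$ bounds on $\nu_n$ and $w_{\mu_n}$, use compactness of $\nabla\mathbb{G}_\alpha$ plus Vitali to pass to the limit, and invoke uniqueness to upgrade subsequential convergence to convergence of the whole sequence. The only cosmetic difference is that the paper runs the argument starting from an arbitrary subsequence $u_{n_k}$ from the outset, whereas you extract the subsequence midway and close with the same subsequence-of-a-subsequence argument.
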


\begin{proof}
 Let $u_{n_k}$ be a subsequence of $u_n$. By  Theorems \ref{CompPrin} and  \ref{superlinear p bounded} it follows that
$$
  u_{n_k}=\mathbb{G}_\alpha[g(x, |\nabla u_{n_k}|)+ \sigma \nu_{n_k}]+\varrho\mathbb{P}_\alpha[\mu_{n_k}].
$$
By \eqref{property omega} we have for any $\phi \in \mathbb{X}_\alpha(\Omega)$
$$
   \int_\Omega \mathbb{P}_\alpha[\mu_{n_k}](-\Delta )^{\alpha}\phi\,dx = \int_\Omega w_{\mu_{n_k}}\phi\,dx \to \int_\Omega w_{\mu}\phi\,dx =\int_\Omega \mathbb{P}_\alpha[\mu](-\Delta )^{\alpha}\phi\,dx  \textnormal{ as }k \to \infty,
$$
by the assumptions on $\mu$ stated in Section \ref{prelim} and  dominated convergence theorem. We conclude that $\mathbb{P}_\alpha[\mu]$ solves problem \eqref{poisson}.

On the other hand, $v_{n_k} = \mathbb{G}_\alpha[g(x, |\nabla u_{n_k}|)+ \sigma \nu_{n_k}]$  satisfies
\begin{equation}\label{eqqqq}
\int_\Omega v_{n_k}(-\Delta)^{\alpha}\phi\,dx=\int_\Omega g(x, |\nabla
u_{n_k}|) \phi\,dx + \sigma\int_\Omega \phi d\nu_{n_k}. \quad \textnormal{
for all }  k.
\end{equation}
By assumption, $$\int_\Omega \phi d\nu_{n_k} \to
\int_\Omega \phi d\nu$$for all test function $\phi$. Appealing to
the proof of Theorem \ref{superlinear p bounded}, we have that there
is $\lambda^{\ast}_{n_k}>0$ so that
$$
   \|\nabla v_{n_k}\|_{L^{p}(\Omega)}\leq \lambda^{\ast}_{n_k}.
$$
Observe that it is possible to choose $\lambda^{\ast}$ so that $\lambda^{\ast}_{n_k} \leq \lambda^{\ast}$ for all $k$. This follows from the uniform boundedness of $\nu_{n_k}$ in $L^{1}(\Omega)$,  \eqref{ineqq}, \eqref{ineqqq}, and the fact that
$$
   \|\nabla\mathbb{P}_\alpha[\mu_{n_k}]\|_{L^{p}(\Omega)} \leq C_0 \|w_{\mu_{n_k}}\|_{L^{1}(\Omega)}\leq C,\quad \textnormal{for all }n.
$$
Hence, $g(x, |\nabla u_{n_k}|)+\sigma\nu_{n_k}$ is uniformly bounded in $L^{1}(\Omega)$. From the compactness of $\nabla \mathbb{G}_\alpha$, there is a further subsequence, $v_{n_{k_i}} $ converging to some $v$ in $W^{1, p}(\Omega)$. Moreover, since $\mathbb{P}_\alpha[\mu_{n}]=\mathbb{G}_\alpha[w_{\mu_{n}}]$ for all $n$ and $w_{\mu_{n}}$ is uniformly bounded in $L^{1}(\Omega)$, it follows that
$$
  u_{n_{k_i}} \to u \defeq  v+\varrho \mathbb{P}_\alpha[\mu]
$$
strongly in $W^{1, p}(\Omega)$. Appealing to Vitali's converging theorem and taking $i \to \infty$ in \eqref{eqqqq}, it follows that $v$ solves \eqref{diri}. Therefore, $u$ solves \eqref{estability}. Hence, every subsequence of $u_n$ has a further subsequence converging, by uniqueness,  to the same  limit $u$. Therefore, the whole sequence $u_n$ converges to $u$.
\end{proof}

\begin{remark}
In the cases $p \in [1, \infty)$ with general $g$ or $p \in (0, 1)$,  the statement \eqref{conv u limit} holds true for a subsequence due to a possible lack of uniqueness.
\end{remark}

\section{Existence results for measures concentrated on the boundary}\label{Sec6}

In the previous sections, we have considered integro-differential problems  with boundary values $\mu$ supported in  $\mathbb{R}^{N}\setminus \overline{\Omega}$.  In this part we shall add boundary measures $\eta$ concentrated on $\partial \Omega$. Precisely, we shall be interested in studying existence of solutions to the following problems

 \begin{equation}\label{maineq-conc}
\left\{
\begin{array}{rcll}
  (-\Delta)^\alpha u & = &g(x, |\nabla u|) +\sigma \nu &  \text{in } \Omega,\\
  u&=& \eta & \text{on }\partial \Omega,\\
  u& = & \varrho\mu &  \text{on } \R^N\setminus\overline{\Omega}.
\end{array}
\right.
\end{equation}

For a given Radon measure $\eta \in  \mathcal{M}(\overline{\Omega})$  supported on $\partial \Omega$, we first consider the simpler problem

\begin{equation}\label{concentrated1}
\begin{cases}
(-\Delta)^\alpha u=0 &  \text{in } \Omega,\\
  \qquad \quad u=\eta &  \text{on } \partial \Omega.
\end{cases}
\end{equation}

In the local case (see for instance \cite{BVV}, \cite{GV} and  \cite[Chapter 1]{Marcus-V}) a solution $u \in L^{1}(\Omega)$ to the problem
\begin{equation*}
\begin{cases}
-\Delta u=\mu &  \text{in } \Omega,\\
  \,\,\,\quad u=\eta &  \text{on } \partial \Omega
\end{cases}
\end{equation*}
is understood in the sense that
$$
   \int_\Omega u (-\Delta)\xi   dx = \int_\Omega \xi d\mu - \int_{\partial \Omega}\frac{\partial \xi}{\partial \overrightarrow{n}_x}d \eta \quad \text{ for all $\xi \in C^{2}_0(\overline{\Omega})$}.
$$
Here $\overrightarrow{n}_x$ denotes the unit inward normal vector  at $x \in \partial \Omega$. This inspired the definition of solutions to \eqref{concentrated1}. In effect, motivated by \cite{CH}, we define the normal derivative $\frac{\partial^{\alpha}\eta}{\partial \overrightarrow{n}_x^{\alpha}}$ in the distributional sense as follows
$$
   \left\langle \frac{\partial^{\alpha}\eta}{\partial \overrightarrow{n}_x^{\alpha}}, \xi \right\rangle = \int_{\partial \Omega}\frac{\partial^{\alpha}\xi}{\partial \overrightarrow{n}_x^{\alpha}}(x)  d \eta(x) \qquad \xi \in \mathbb{X}_\alpha(\Omega),
$$
where for $x \in \partial \Omega$
$$
   \frac{\partial^{\alpha}\xi}{\partial \overrightarrow{n}_x^{\alpha}}(x) \defeq \lim_{t \to 0}\frac{\xi(x+t\overrightarrow{n}_x)-\xi(x)}{t^{\alpha}}= \lim_{t \to 0^{+}}t^{-\alpha}\xi(x+t\overrightarrow{n}_x).
$$
Roughly speaking, the derivative $\frac{\partial^{\alpha}\eta}{\partial \overrightarrow{n}_x^{\alpha}}$ may be approximated by measures $\left\lbrace t^{-\alpha}\eta_t \right\rbrace_{t>0}$ with support in the level sets
$$
 \Xi(t) =  \left\lbrace x \in \Omega: \rho(x)= t \right\rbrace,
$$
where
$$
\rho(x) \defeq \left\{
\begin{array}{lc}
  \text{dist}(x, \partial \Omega) & \forall\,\,x \in \Omega \\
  0 & \text{otherwise}.
\end{array}
\right.
$$
\begin{defi}
A function $u \in L^{1}(\Omega)$ is a weak solution to \eqref{concentrated1} if
$$
   \int_\Omega u (-\Delta)^{\alpha} \xi dx = \int_{\partial \Omega}\frac{\partial^{\alpha}\xi}{\partial \overrightarrow{n}_x^{\alpha}}(x)  d \eta(x) \qquad \text{ for all }\xi \in \mathbb{X}_\alpha(\Omega).
$$
\end{defi}

For convenience of the reader, we will provide some facts from \cite{CH}. Firstly, the approximation of $\frac{\partial^{\alpha}\eta}{\partial \overrightarrow{n}_x^{\alpha}}$ by Radon measures concentrated on manifolds in $\Omega$ is done as follows: by  \cite{GT} and \cite{Marcus-V1}, there exists $t_0 > 0$ such that
$$
   \Omega_t = \left\lbrace x \in \Omega: \rho(x) > t \right\rbrace,
$$
is a $C^{2}$ domain for all $t \in [0, t_0]$ and for each point $x_t \in \partial \Omega_t$ there corresponds $x \in \partial \Omega$ such that
$$
   |x-x_t|= \rho(x).
$$
Conversely, for each $x \in \partial \Omega$, there is a unique $x_t \in \partial \Omega_t$ so that
$$
  |x-x_t|= \rho(x_t).
$$
In this way, for each Borel subset $E \subset \partial \Omega$, there is a unique $E_t \subset \partial \Omega_t$ so that $E_t=\left\lbrace x_t: x \in E\right\rbrace$. Define the measures
$$
  \eta_t(E_t)=\eta(E),\,\text{for each} \,\,\,E_t \subset \partial \Omega_t \,\,\,\text{Borel.}
$$
Hence $\eta_t$ is a Radon measure supported in $\partial \Omega_t$ that may be extended to $\overline{\Omega}$ by
$$
  \eta_t(E)= \eta_t(E \cap \partial \Omega_t), \qquad E \subset \overline{\Omega}\text{ Borel}.
$$
By \cite[Proposition 2.1]{CH}  we have that the Radon measures $\left\lbrace t^{-\alpha }\eta_t \right\rbrace_{t>0}$ converge to $\frac{\partial^{\alpha}\eta}{\partial \overrightarrow{n}_x^{\alpha}}$ in the distributional sense
$$
   \lim_{t \to 0^{+}}\int_{\partial \Omega_t}\xi(x)t^{-\alpha}d\eta_t(x)=\int_{\partial \Omega}\frac{\partial \xi}{\partial \overrightarrow{n}_x^{\alpha}}d\eta(x) \qquad \text{ for all }\xi \in \mathbb{X}_\alpha(\Omega).
$$
Consequently, since $t^{-\alpha}\eta_t$ has support in $\Omega$, the solution of problem \eqref{concentrated1}  may be approximated by  solutions $u_t$ to

\begin{equation}\label{concentrated12}
\begin{cases}
(-\Delta)^\alpha u=t^{-\alpha}\eta_t &  \text{in } \Omega,\\
  \qquad \quad u=0 &  \text{on } \mathbb{R}^{N}\setminus \Omega.
\end{cases}
\end{equation}

For existence of solutions to \eqref{concentrated12} and their convergence to a solution of \eqref{concentrated1} we refer the reader to \cite{CH}. We now give the definition of solution to problem \eqref{maineq-conc}.

\begin{defi}
A function $u \in L^{1}(\Omega)$, with $g(x, |\nabla u|)\in L^{1}(\Omega)$, is a weak solution to \eqref{maineq-conc} if
$$
  \int_\Omega u (-\Delta)^{\alpha}\xi dx = \int_\Omega g(x, |\nabla u|)\xi dx + \sigma \int_\Omega \xi d \nu- \int_{\partial \Omega}\frac{\partial \xi}{\partial \overrightarrow{n}_x^{\alpha}}d\eta(x)  - \varrho\int_{\mathbb{R}^{N}\setminus \Omega}\mathcal{N}_\alpha \xi d \mu
$$
for all $\xi \in \mathbb{X}_\alpha(\Omega).$\end{defi}

\begin{teo}\label{MThm3}Assume that $p \in (0, p^{\ast})$ and that $g$ satisfies \eqref{asump g all p}. Moreover, suppose that all the general hypothesis from Section \ref{prelim} are in force and that $\eta \in \mathcal{M}(\overline{\Omega})$ is supported in $\partial \Omega$. Then, problem \eqref{maineq-conc} admits a solution $u$ for all small enough $c$ as in \eqref{asump g all p}.
\end{teo}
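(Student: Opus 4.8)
The plan is to combine the boundary‑measure approximation set up in Section \ref{Sec6} with the existence machinery of Section \ref{subcritical}. For $t\in(0,t_0]$ the measure $t^{-\alpha}\eta_t$ is a \emph{finite} Radon measure supported on the $C^2$ hypersurface $\partial\Omega_t$, which is compactly contained in $\Omega$; hence the approximate problem $(-\Delta)^{\alpha}u=g(x,|\nabla u|)+\sigma\nu+t^{-\alpha}\eta_t$ in $\Omega$, $u=\varrho\mu$ in $\R^N\setminus\Omega$, is exactly of the type treated in Theorems \ref{superlinear p bounded} and \ref{MThm2}, with $t^{-\alpha}\eta_t$ acting as an additional \emph{fixed} interior datum. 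So first, for each fixed $t$, I would rerun the fixed‑point argument of Theorem \ref{superlinear p bounded} (if $1\le p<p^{\ast}$) or of Theorem \ref{MThm2} (if $0<p\le1$) with the operator $T_n(v)=\mathbb{G}_\alpha[g_n(x,|\nabla(v+\varrho\mathbb{P}_\alpha[\mu])|)+\sigma\nu_n+t^{-\alpha}\eta_t]$, obtaining a non‑negative weak solution $u_t$ of the approximate problem admitting the decomposition
$$u_t=\mathbb{G}_\alpha[g(x,|\nabla u_t|)+\sigma\nu]+\mathbb{G}_\alpha[t^{-\alpha}\eta_t]+\varrho\,\mathbb{P}_\alpha[\mu].$$

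The new ingredient here is a bound for $\mathbb{G}_\alpha[t^{-\alpha}\eta_t]$ that is \emph{uniform in $t\in(0,t_0]$}: although the total mass $t^{-\alpha}\eta(\partial\Omega)\to\infty$, the pointwise Green estimates recalled in the Introduction — in particular the boundary decay $|G_\alpha(x,y)|\lesssim \mathrm{d}(y)^{\alpha}|x-y|^{\alpha-N}$, together with $|\nabla_xG_\alpha(x,y)|\lesssim G_\alpha(x,y)\max\{|x-y|^{-1},\mathrm{d}(x)^{-1}\}$ and $\mathrm{d}(y)\simeq t$ on $\partial\Omega_t$ — make the factor $t^{-\alpha}$ harmless and give $\mathbb{G}_\alpha[t^{-\alpha}\eta_t]\in\bigcup_{1\le q<p^{\ast}}W_0^{1,q}(\Omega)$ with norms bounded uniformly in $t$ (this is implicit in \cite{CH}; compare \cite[Lemma 2.4]{AP20} and \cite[Lemma 5.2]{VC1}). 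With this at hand, the smallness threshold on $c$ produced by the analysis of $\mathrm F$ (resp.\ $\mathbb F$) in \eqref{ineqq}--\eqref{ineqqq} — now with $\varrho^{p}\|\nabla\mathbb{P}_\alpha[\mu]\|_{L^p}^{p}$ augmented by $\sup_t\|\nabla\mathbb{G}_\alpha[t^{-\alpha}\eta_t]\|_{L^p}^{p}$ — is independent of $t$, so for all sufficiently small $c$ one gets $u_t$ for every $t$ with $\|\nabla u_t\|_{L^p(\Omega)}$ (resp.\ $\|\nabla u_t\|_{L^1(\Omega)}$) bounded uniformly in $t$, and $u_t$ satisfies the weak formulation of the approximate problem for all $\xi\in\mathbb{X}_\alpha(\Omega)$, the datum $t^{-\alpha}\eta_t$ entering as $\int_\Omega\xi\,t^{-\alpha}d\eta_t$.

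Next I would let $t\to0^{+}$, imitating Theorem \ref{stability}. The uniform gradient bound and \eqref{asump g all p} make $g(x,|\nabla u_t|)+\sigma\nu$ bounded in $L^1(\Omega)$; by the compactness of $h\mapsto(\mathbb{G}_\alpha[h],\nabla\mathbb{G}_\alpha[h])$ from $L^1$ to $L^p\times L^p$, together with $\mathbb{G}_\alpha[t^{-\alpha}\eta_t]\to u_0$ in $W^{1,q}(\Omega)$ for every $q<p^{\ast}$ (where $u_0$ solves \eqref{concentrated1}; this sharpens the convergence statement of \cite{CH} using the uniform $W^{1,q}$‑bound above), we extract a subsequence along which $u_t\to u$ strongly in $W^{1,p}(\Omega)$ and $\nabla u_t\to\nabla u$ a.e., hence $g(x,|\nabla u_t|)\to g(x,|\nabla u|)$ a.e.\ in $\Omega$. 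Uniform integrability of $g(x,|\nabla u_t|)$ is then obtained exactly as in \eqref{eq 12}--\eqref{eq 12345}, using in addition the equi‑integrability of $\{|\nabla\mathbb{G}_\alpha[t^{-\alpha}\eta_t]|^{p}\}_t$ (a consequence of its uniform boundedness in $L^q$ for some $q>p$), and Vitali's theorem upgrades the convergence to $L^1(\Omega)$. Passing to the limit in the weak formulation, invoking $\int_\Omega\xi\,t^{-\alpha}d\eta_t\to\int_{\partial\Omega}\frac{\partial^{\alpha}\xi}{\partial\overrightarrow{n}_x^{\alpha}}d\eta$ from \cite[Proposition 2.1]{CH}, and treating the $\nu$‑ and $\mu$‑terms as in Section \ref{prelim}, shows that $u=\mathbb{G}_\alpha[g(x,|\nabla u|)+\sigma\nu]+u_0+\varrho\,\mathbb{P}_\alpha[\mu]$ is a weak solution of \eqref{maineq-conc} in the sense of the definition preceding the statement; since uniqueness is not asserted, keeping the subsequence suffices.

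I expect the main obstacle to be precisely the uniform‑in‑$t$ control of $\mathbb{G}_\alpha[t^{-\alpha}\eta_t]$ and of its gradient: the family $\{t^{-\alpha}\eta_t\}_t$ does not stay bounded in $\mathcal{M}(\overline{\Omega})$, so neither the $L^1$‑based fixed‑point estimates nor the $L^1$‑based compactness of $\nabla\mathbb{G}_\alpha$ can be applied to it naively, and everything hinges on extracting from the sharp Green‑kernel bounds (and from \cite{CH}) the cancellation coming from the $\mathrm{d}(y)^{\alpha}$ boundary decay, uniformly in $t$, both for the self‑mapping estimate and for the equi‑integrability needed in the passage to the limit. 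The remaining steps are routine adaptations of the proofs of Theorems \ref{superlinear p bounded}, \ref{MThm2} and \ref{stability}.
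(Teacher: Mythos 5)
Your overall strategy --- approximate the boundary measure by interior measures, solve, and pass to the limit --- is in the right spirit, but it is organized differently from the paper, and it leaves exactly the step you flag as the ``main obstacle'' unproven; as you have set things up, that step is genuinely problematic. You work with the exact push-forward measures $t^{-\alpha}\eta_t$, whose total mass $t^{-\alpha}\eta(\partial\Omega)$ blows up, and you claim that the pointwise Green-kernel bounds give $\sup_t\|\mathbb{G}_\alpha[t^{-\alpha}\eta_t]\|_{W^{1,q}(\Omega)}<\infty$ for every $q<p^{\ast}$. Chasing the estimates you quote: with $\mathrm{d}(y)\simeq t$ on $\partial\Omega_t$ one only gets $t^{-\alpha}|\nabla_xG_\alpha(x,y)|\lesssim |x-y|^{\alpha-N-1}+|x-y|^{\alpha-N}\mathrm{d}(x)^{-1}$, and integrating a kernel this singular against a measure carried by an $(N-1)$-dimensional hypersurface does not produce a function in $L^{q}(\Omega)$ for $q$ anywhere near $p^{\ast}$ --- a direct computation with surface measure yields an exponent range strictly below what the fixed-point self-mapping estimate requires, and it degenerates as $x$ approaches $\partial\Omega_t$. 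So the uniform bound is not a consequence of the estimates you invoke, and nothing in your write-up supplies it. A secondary consequence: since $\{t^{-\alpha}\eta_t\}_t$ is not bounded in $L^{1}(\Omega)$, you also cannot use the $L^{1}\to W^{1,p}$ compactness of $\nabla\mathbb{G}_\alpha$ on that piece of the source when passing to the limit, so the convergence of $\mathbb{G}_\alpha[t^{-\alpha}\eta_t]$ in $W^{1,q}$ must be proved separately and again rests on the missing estimate.

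The paper avoids this entirely by reversing the order of the two limits. It first solves the purely \emph{linear} boundary-measure problem \eqref{concentrated1}: the singular measures are replaced by smooth non-negative $\eta_k\in C^{1}(\overline\Omega)$ supported in the collar $(\partial\Omega)_{2t_k}$ and converging to $\eta$ in the appropriate duality. On that collar $\rho\le 2t_k$, so $t_k^{-\alpha}\le 2^{\alpha}\rho^{-\alpha}$; testing against $\mathbb{X}_\alpha(\Omega)\subset C_\alpha(\overline\Omega)$ and applying the Banach--Steinhaus theorem gives the \emph{uniform} bound $\|t_k^{-\alpha}\eta_k\|_{L^{1}(\Omega)}\le C$, after which \cite[Proposition 2.3]{CV} (the $L^{1}\to W_0^{1,q}$ mapping property, $q<p^{\ast}$) yields uniform $W_0^{1,q}$ bounds for the linear solutions $w_k$ of \eqref{concentrated12}; their limit $w=\mathbb{G}_\alpha\bigl[\partial\eta/\partial\overrightarrow{n}_x^{\alpha}\bigr]$ solves \eqref{concentrated1} and lies in $W_0^{1,q}(\Omega)$ for all $q<p^{\ast}$. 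Only then does the paper run a \emph{single} fixed-point argument, for problem \eqref{diri34}, in which $w$ enters exactly as $\varrho\mathbb{P}_\alpha[\mu]$ did in Theorems \ref{superlinear p bounded} and \ref{MThm2}: as a fixed translation inside the gradient nonlinearity. If you want to salvage your scheme, the cleanest fix is to adopt this mollified, collar-supported approximation so that the factor $t^{-\alpha}$ is absorbed by the weight $\rho^{-\alpha}$ and everything reduces to uniform $L^{1}$ bounds --- or, more simply, to decouple the linear boundary-measure problem from the nonlinear one as the paper does.
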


\begin{proof}
Firstly, we solve \eqref{concentrated1}. Consider $k_0$ large enough so that $k \geq k_0$ implies $t_k\defeq \frac{1}{k} < t_0$. In what follows, we take $k \geq k_0$.  Let $\eta_k \in C^{1}(\overline{\Omega})$ be  non-negative, with
$$
   \text{supp}(\eta_k) \subset (\partial \Omega)_{2t_k}\defeq \left\lbrace x\in \overline{\Omega}: \rho(x)< 2t_k \right\rbrace$$and $\eta_k \to \eta$ in the sense of duality in the Banach space
$$
   C_\alpha(\overline{\Omega})\defeq \left\lbrace f \in C(\overline{\Omega}): \rho^{-\alpha}f \in  C(\overline{\Omega}) \right\rbrace.
$$
By \cite[Lemma 2.1]{CV1}, $\mathbb{X}_\alpha(\Omega) \subset C_\alpha(\overline{\Omega})$. Moreover, by  Banach-Steinhaus theorem (or Uniform boundedness Principle), it is possible to derive that
$$
   \|\eta_k \|_{L^{1}(\Omega)},\,\|t_k^{-\alpha}\eta_k \|_{L^{1}(\Omega)}  \leq C \quad  \text{for all}\,\,\, k.
$$
For convenience of the reader,  we next provide details in deriving the uniform boundedness of  $t_k^{-\alpha}\eta_k$ in $L^{1}(\Omega)$ fashion (a similar argument works for $\eta_k$). Observe that for all $\xi \in C_\alpha(\overline{\Omega})$ there is a constant $ C(\xi) >0$ so that
$$
   \Big\vert \int_\Omega t_k^{-\alpha}\eta_k \xi dx\Big\vert \leq \int_{ (\partial \Omega)_{2t_k}} t_k^{-\alpha}\eta_k |\xi| d\eta(x) \leq 2^{\alpha}\int_{ (\partial \Omega)_{2t_k}} |\rho^{-\alpha}\xi|\eta_k d\eta(x) \leq C(\xi) \,\,\,\text{for all}\,\,\, k.
$$
Here we used the fact that $\eta_k$ is uniformly bounded in $L^{1}(\Omega)$. By the Uniform Boundedness Principle, there is a constant $C>0$ (independent of $k$) so that
\begin{equation}\label{UPB}
\sup_{||\xi||_{C(\overline{\Omega})} \leq 1}\Big\vert \int_\Omega t_k^{-\alpha}\eta_k \xi dx\Big\vert \leq C \quad\text{ for all }k.
\end{equation}
For each $k$, consider a sequence of compact sets $K_{k, n}\nearrow \Omega$ as $n \to \infty$ and take $\xi_{n, k} \in C_\alpha(\overline{\Omega})$ such that $0\leq \xi_{n, k} \leq 1$, $\xi_{n, k}= 1$ in $K_{n, k}$ and $\xi_{n, k}=0$ near $\partial \Omega.$ Hence monotone convergence theorem and \eqref{UPB}  imply
$$
   \int_\Omega t_k^{-\alpha}\eta_kdx=\lim_{n \to \infty}\int_\Omega t_k^{-\alpha}\eta_k \chi_{K_{n, k}}dx \leq \limsup_{n \to \infty} \int_{\Omega}t_k^{-\alpha}\eta_k \xi_{n, k}dx\leq C.
$$
In this way, we conclude that $t_k^{-\alpha}\eta_k$ is uniformly integrable in $\Omega$.

 Moreover, by \cite[Section 6]{CH} there is a non-negative solution $w_k \in L^{1}(\Omega)$ of \eqref{concentrated12} with $t^{-\alpha}\eta_t$ replaced by $t_k^{-\alpha}\eta_k$. By \cite[Proposition 2.3]{CV}, for each $q \in (1, p^{\ast})$, there is a constant $c_0 > 0$ such that
$$
   ||w_k||_{W_0^{1, q}(\Omega)} \leq c_0||t_k^{-\alpha}\eta_k||_{L^{1}(\Omega)} \leq C \text{ for all }k.
$$
Therefore, $u_k$ converges weakly to a function $w \in W^{1, q}_0(\Omega)$. Moreover, in  \cite{CH} it is shown the convergence of $w_k$ in $L^{1}(\Omega)$ to a solution of \eqref{concentrated1}. Hence, the limiting profile $w$ must be the solution of \eqref{concentrated1} and we deduce the further regularity $w \in W^{1, q}_0(\Omega)$. We call $w=\mathbb{G}_\alpha\left[\frac{\partial \eta}{\partial \overrightarrow{n}_x^{\alpha}}\right]$.
Consider
$$
   u= \mathbb{G}_\alpha[g(x, |\nabla u|)+ \sigma \nu]+ \varrho \mathbb{P}_\alpha[\mu]+ \mathbb{G}_\alpha\left[\frac{\partial \eta}{\partial \overrightarrow{n}_x^{\alpha}}\right],
$$
where $v=\mathbb{G}_\alpha[g(x, |\nabla u|)+\sigma\nu]$ solves
\begin{equation}\label{diri34}
\begin{cases}
(-\Delta)^\alpha v=g\left(x, \left|\nabla\left( v+\varrho\mathbb{P}_{\alpha}[\mu]+ \mathbb{G}_\alpha\left[\frac{\partial \eta}{\partial \overrightarrow{n}_x^{\alpha}}\right]\right)\right|\right) + \sigma \nu &  \text{on } \Omega,\\
\qquad \quad v=0 &  \text{in } \R^N\setminus\Omega
\end{cases}
\end{equation}
in the sense of Definition \ref{weak solu1}. The existence of $v$ is achieved as in Section \ref{subcritical} recalling that $\mathbb{G}_\alpha\left[\frac{\partial \eta}{\partial \overrightarrow{n}_x^{\alpha}}\right] \in W^{1, p}_0(\Omega)$. This ends the proof of the theorem.
\end{proof}

\begin{remark} As in the first part of paper, we are able to prove uniqueness assertions to \eqref{maineq-conc} provided $g$ fulfils the assumptions of the Comparison Principle result (Theorem \ref{CompPrin}).
\end{remark}

\section{Final comments}\label{ClosRem}

We have presented some existence/uniqueness and regularity results for problems driven by fractional diffusion operators and with nonlinear gradient sources and measures. In order to conclude our work, let us point out that our results also work for problems with more general nonlinear gradient term $g \in C^0(\Omega\times \R \times [0, \infty)) \cap L^{1}(\Omega)$ as follows:
$$
\left\{
\begin{array}{rcll}
  (-\Delta)^\alpha u(x) & = &g(x, u, |\nabla u|) +\sigma \nu &  \text{in } \Omega,\\
  u(x) & = & \varrho\mu &  \text{in } \R^N\setminus\Omega,
\end{array}
\right.
$$
which fulfils the following growth assumptions: $|g(x, s,  \xi)|\leq c_0|\xi|^p+c_1|s|^q+\varepsilon|f(x)|,$
where $0<q<+\infty$ and $p$, $\sigma$, $\nu$, $\varrho$, $\mu$ and $f$ are as before. In particular, such analysis extends the former results in \cite{VC1}.

Finally, it is worth emphasising that our approach can be applied for weak solutions of more general fractional-type problems (as long as existence and compactness of the associated Green operators hold) of the form:
$$
\left\{
\begin{array}{rcll}
  -\mathcal{L}_{\Phi} u(x) & = &g(x, u, |\nabla u|) +\sigma \nu &  \text{in } \Omega,\\
  u(x) & = & \varrho\mu &  \text{in } \R^N\setminus\Omega,
\end{array}
\right.
$$
where $-\mathcal{L}_{\Phi}$ is a nonlocal elliptic operator defined by
$$
 \displaystyle \langle -\mathcal{L}_{\Phi} u(x), \varphi\rangle = \int_{\R^N} \int_{\R^N} \Phi(u(x)-u(y))(\varphi(x)-\varphi(y))\mathrm{K}(x, y)dxdy,
$$
for every smooth function $\varphi$ with compact support. Also, the function $\Phi: \R \to \R$ is assumed to be continuous, fulfilling $\Phi(0) = 0$ and
the monotonicity property
$$
\lambda|s|^2\leq \Phi(s)s\leq \Lambda |s|^2 \quad \forall\,\,\, s\in \R
$$
for constants $\Lambda \geq \lambda >0$, and $\mathrm{K}: \R^N \times \R^N \to \R$ is a general singular kernel satisfying the following structural properties: there exist constants $\Lambda \geq \lambda >0$ and $\mathfrak{M}, \varsigma >0$ such that
\begin{enumerate}
  \item[(1)][{\bf Symmetry}] $\mathrm{K}(x, y) = \mathrm{K}(y, x)$ for all $x, y \in \R^N$;
  \item[(2)][{\bf Elipticity condition}] $\lambda \leq \mathrm{K}(x, y).|x-y|^{N+2\alpha} \leq \Lambda$ for $x, y \in \R^N$, $x \neq y$;
  \item[(3)][{\bf Integrability at infinity}] $0\leq \mathrm{K}(x, y) \leq \frac{\mathfrak{M}}{|x-y|^{N+\varsigma}}$ for $x \in B_2$ and $y \in \R^N \setminus B_{\frac{1}{4}}$.
  \item[(4)][{\bf Translation invariance}] $\mathrm{K}(x+z, y+z)= \mathrm{K}(x, y)$ for all $x,y,z \in \R^N$, $x \neq y$.
  \item[(5)][{\bf Continuity}] The map $x \mapsto \mathrm{K}(x, y)$ is continuous in $\R^N \setminus \{y\}$.
\end{enumerate}
Clearly the former class of operators have as prototype the $\alpha-$fractional Laplacian operator provided $\Phi(s) = s$ and $\mathrm{K}(x, y) = \frac{1}{|x-y|^{N+2\alpha}}$ (cf. \cite{KMS18} and references therein).

Finally, for $0< \alpha < 1< p< \infty$ consider the nonlinear
integro-differential operator
$$
\displaystyle (-\Delta)_p^{\alpha} u(x) \defeq  \mathrm{C}_{N,s, p}.\text{p.v.}\int_{\R^n} \frac{|u(x)-u(y)|^{p-2}(u(x)-u(y))}{|x-y|^{N+\alpha p}}dy.
$$
Such an operator is nowadays known as \textit{fractional
$p-$Laplacian} (see \cite{daSR19}, \cite{daSS19-1}, \cite{KMS15} and
\cite{KMS18} and references therein). It seems an interesting and
challenging proposal to seek new strategies in order to prove
existence/uniqueness and regularity results, for instance, without
the explicit representation formula for solutions and associated
Green function.

\noindent{\bf Acknowledgements.} J.V. S. was partially supported by Coordena\c{c}\~{a}o de Aperfei\c{c}oamento de Pessoal de N\'{i}vel Superior (PNPD/CAPES-UnB-Brazil)  Grant No. 88887.357992/2019-00 and CNPq-Brazil under Grant No. 310303/2019-2. A.S. is supported by 
PICT 2017-0704, by Universidad Nacional de San Luis under grants
PROIPRO 03-2418 and PROICO 03-1916.  P. O. is supported by Proyecto Bienal  B080 Tipo 1 (Res. 4142/2019-R).


\begin{thebibliography}{00}

\bibitem{AP} B. Abdellaoui and I. Peral,
\textit{Towards a deterministic KPZ equation with fractional diffusion: the stationary problem}.
Nonlinearity 31 (2018), no. 4, 1260-1298.

\bibitem{AP20} B. Abdellaoui and I. Peral,
\textit{Corrigendum: Towards a deterministic KPZ equation with fractional diffusion: the stationary problem}.
Nonlinearity 33 (2020), no. 3, C1-1.

\bibitem{AAN10} N. Alibaud, B. Andreianov  and M. Bendahmane,
\textit{Renormalized solutions of the fractional Laplace equation}.
C. R. Math. Acad. Sci. Paris 348 (2010), no. 13-14, 759-762.

\bibitem{A04} D. Applebaum, \textit{L\'{e}vy processes - From probability to finance and quantum groups}. Notices Amer. Math. Soc. 51 (2004), no. 11, 1336-1347.

\bibitem{BFR-O} B. Barrios, A. Figalli and X. Ros-Oton,
\textit{Global regularity for the free boundary in the obstacle problem for the fractional Laplacian}.
Amer. J. Math., 140(2): 415-447, 2018.

\bibitem{BB} Ph. B\'{e}nilan and H. Brezis,
\textit{Nonlinear problems related to the Thomas-Fermi equation}.
J. Evolution Eq. \textbf{3} (2003), 673-770.

\bibitem{BVV} M-F. Bidaut-V\'{e}ron and L. Vivier,
\textit{An elliptic semilinear equation with source term involving boundary measures: the subcritical case}. Rev. Mat. Iberoamericana 16 (2000), no. 3, 477-513.

\bibitem{BCF} C. Bjorland, L. Caffarelli and A. Figalli,
\textit{Non-local gradient dependent operators}.
Adv. Math. \textbf{230} (2012) 1859-1894.

\bibitem{BCF12} C. Bjorland, L. Caffarelli and A. Figalli,
\textit{Nonlocal tug-of-war and the infinity fractional Laplacian}.
Comm. Pure Appl. Math. 65 (2012), no. 3, 337-380.

\bibitem{BGR61} R.M. Blumenthal, R.K. Getoor and D. B. Ray,
\textit{On the distribution of first hits for the symmetric stable processes}.
Trans. Amer. Math. Soc. 99 1961 540-554.

\bibitem{BKN02} K. Bogdan, T. Kulczycki and A. Nowak,
\textit{Gradient estimates for harmonic and $q-$harmonic functions of symmetric stable processes}.
Illinois J. Math. \textbf{46} 2 (2002), 541-556.

\bibitem{BJ} K. Bogdan, T. Jakubowski,
\textit{Estimates of the Green function for the fractional Laplacian perturbed by gradient}.
Potential Anal. 36 (2012), 455-481.

\bibitem{Bre} H. Brezis,
\textit{Some variational problems of the Thomas-Fermi type}.
Variational inequalities and complementarity problems (Proc. Internat. School, Erice, 1978), pp. 53-73, Wiley, Chichester, 1980.

\bibitem{Bucur19} C. Bucur,
\textit{Some observations on the Green function for the ball in the fractional Laplace framework}.
Commun. Pure Appl. Anal. 15 (2016), no. 2, 657-699.

\bibitem{BV} C. Bucur and  E. Valdinocci,
\textit{Nonlocal diffusion and applications}.
Lecture Notes of the Unione Matematica Italiana, 20. Springer, [Cham]; Unione Matematica Italiana, Bologna, 2016. xii+155 pp. ISBN: 978-3-319-28738-6; 978-3-319-28739-3.
%
\bibitem{C} L. Caffarelli,
\textit{Non local operators, drifts and games}.
Nonlinear PDEs. Abel Symposia \textbf{7} (2012), 37-52.
%
\bibitem{CR-OS17} L. Caffarelli, X. Ros-Oton and J. Serra,
\textit{Obstacle problems for integro-differential operators: regularity of solutions and free boundaries}. Invent. Math. 208 (2017), no. 3, 1155-1211.

\bibitem{CS} L. Caffarelli and L. Silvestre,
\textit{Regularity theory for fully-nonlinear integro-differential equations}.
Comm, Pure Appl. Math. \textbf{62} (2009), 597-638.

\bibitem{CV10} L. Caffarelli and A. Vasseur,
\textit{Drift diffusion equations with fractional diffusion and the quasi-geostrophic equation}.
Ann. Math. (2010) 1903-1930.

\bibitem{CS19} N. Cantizano and A. Silva,
\textit{Three solutions for a nonlocal problem with critical growth}.
J. Math. Anal. Appl. 469 (2019), no. 2, 841-851.

\bibitem{CG11} S-Y A. Chang and M. del M. Gonz\'{a}lez,
\textit{Fractional Laplacian in conformal geometry}.
Adv. Math. 226 (2011), no. 2, 1410-1432.

\bibitem{CH} H. Chen, S. Alhomedan, H. Hajaiej and P.  Markowich,
\textit{Complete study of the existence and uniqueness of solutions for semilinear elliptic equations involving measures concentrated on boundary}.
Complex Var. Elliptic Equ. 62 (2017), no. 12, 1687-1729.

\bibitem{VC1} H. Chen, P. Felmer, and L. V\'{e}ron,
\textit{Elliptic equations involving general subcritical source nonlinearity and measures}. 24 pages. 2014. <hal-01072227>.
%
\bibitem{CV1} H. Chen and L. V\'{e}ron,
\textit{Semilinear fractional elliptic equations involving measures}.
J. Differential Equations 257 (2014), no. 5, 1457-1486.

\bibitem{CV} H. Chen and L. V\'{e}ron,
\textit{Semilinear fractional elliptic equations with gradient nonlinearity involving measures}.
J. Funct. Anal. 266 (2014), no. 8, 5467-5492.

\bibitem{ChenS} Z-Q. Chen and R. Song,
\textit{Estimates on Green functions and Poisson kernels for symmetric stable processes}.
Math. Ann. 312 (1998), no. 3, 465-501.
%
\bibitem{CT} R. Cont and P. Tankov,
\textit{Financial modelling with jump processes}.
Chapman $\&$ Hall/CRC Financial Mathematics Series. Chapman $\&$ Hall/CRC, Boca Raton, FL, 2004. xvi+535 pp. ISBN: 1-5848-8413-4.
%
\bibitem{daSR19} J.V. da Silva and  J.D. Rossi,
\textit{The limit as $p \to \infty$ in free boundary problems with fractional $p-$Laplacians}.
Trans. Amer. Math. Soc. 371 (2019), no. 4, 2739-2769.

\bibitem{daSS19-1} J.V. da Silva and A.M. Salort,
\textit{A limiting obstacle type problem for the inhomogeneous $p-$fractional Laplacian}.
Calc. Var. Partial Differential Equations 58 (2019), no. 4, Paper No. 127, 29 pp.

\bibitem{DFV} S. Dipierro, A. Figalli and E. Valdinoci,
\textit{Strongly non local dislocation dynamics in crystals}.
Comm. Partial Differential Equations, 39(12): 2351-2387, 2014.

\bibitem{DPV} S. Dipierro, G. Palatucci and E. Valdinoci,
\textit{Dislocation dynamics in crystals: a macroscopic theory in a fractional Laplace setting}.
Comm. Math. Phys., 333 (2) 1061-1105, 201.

\bibitem{DOV} S. Dipierro, X. Ros-Oton, and E. Valdinoci,
\textit{Nonlocal problems with Neumann boundary conditions},
Rev. Mat. Iberoam. \textbf{33} 2 (2017), 377-416.

\bibitem{FBSS18} J. Fern\'{a}ndez Bonder, N. Saintier and A. Silva,
\textit{The concentration-compactness principle for fractional order Sobolev spaces in unbounded domains and applications to the generalized fractional Brezis-Nirenberg problem}.
NoDEA Nonlinear Differential Equations Appl. 25 (2018), no. 6, Art. 52, 25 pp.

\bibitem{FBSSp19} J. Fern\'{a}ndez Bonder, A. Silva and J.F Spedaletti,
\textit{ Uniqueness of minimal energy solution for a semilinear problem involving the fractional Laplacian}. Proc. Amer. Math. Soc. 147 (2019), n07, 2925-2936.

\bibitem{FV13} G. Franzina and E. Valdinoci,
\textit{Geometric analysis of fractional phase transition interfaces}.
Geometric properties for parabolic and elliptic PDE's, 117-130, Springer INdAM Ser., 2, Springer, Milan, 2013.

\bibitem{GMS13} J.E. Gal\'{e}, P.J. Miana and P.R. Stinga,
\textit{Extension problem and fractional operators: semigroups and wave equations}.
J. Evol. Equ. 13 (2013), no. 2, 343-368.

\bibitem{GT} D. Gilbarg and N.S. Trudinger,
\textit{Elliptic partial differential equations of second order}.
Second edition. Grundlehren der Mathematischen Wissenschaften [Fundamental Principles of Mathematical Sciences], 224. Springer-Verlag, Berlin, 1983. xiii+513 pp. ISBN: 3-540-13025-X.

\bibitem{GO} G. Gilboa and S. Osher,
\textit{Non-local operators with applications to image processing.}
Miltiscale Model. Simul. \textbf{7} (2008), 1005-1028.

\bibitem{GV} A. Gmira and L. V\'{e}ron,
\textit{Boundary singularities of solutions of some nonlinear elliptic equations}.
Duke Math. J. \textbf{64} (1991), 271-324.

\bibitem{GQ13} M. del M. Gonz\'{a}lez and J. Qing,
\textit{Fractional conformal Laplacians and fractional Yamabe problems}.
Anal. PDE 6 (2013), no. 7, 1535-1576.

\bibitem{KPZ} M. Kardar, G. Parisi and Y.C. Zhang,
\textit{Dynamic scaling of growing interfaces}.
Phys. Rev. Lett. \textbf{56} (1986), 889-892.

\bibitem{KPU} K. H. Karlsen, F. Petitta and S. Ulusoy,
\textit{A duality approach to the fractional Laplacian with measure data}.
Publ. Mat. 55 (2011), no. 1, 151-161.

\bibitem{Kul97} T. Kulczycki,
\textit{Properties of Green function of symmetric stable processes}.
Probab. Math. Statist. 17 (1997), no. 2, Acta Univ. Wratislav. No. 2029, 339-364.

\bibitem{KMS15} T. Kuusi, G.  Mingione and  Y. Sire,
\textit{Nonlocal equations with measure data}.
Comm. Math. Phys. 337 (2015), no. 3, 1317-1368.

\bibitem{KMS18} T. Kuusi, G.  Mingione and  Y. Sire,
\textit{Regularity issues involving the fractional $p-$Laplacian}.
Recent developments in nonlocal theory, 303-334, De Gruyter, Berlin, 2018.

\bibitem{L00} N. Laskin,
\textit{Fractional quantum mechanics and L\'{e}vy path integrals}.
Physics Letters A , 268 (4): 298-305, 2000.

\bibitem{Marcus-V1} M. Marcus and L. V\'eron,
\textit{Existence and uniqueness results for large solutions of general elliptic equations}.
J. Evol. Eq. \textbf{3} (2004), 637-652.

\bibitem{Marcus-V} M. Marcus and L. V\'eron,
\textit{Nonlinear second order elliptic equations involving measures}.
De Gruyter Series in Nonlinear Analysis and Applications, 21. De Gruyter, Berlin, 2014. xiv+248 pp. ISBN: 978-3-11-030515-9; 978-3-11-030531-9.

\bibitem{MK} R. Metzler and J. Klafter,
\textit{The random walk's guide to anomalous diffusion: a fractional dynamics approach}.
Phys. Rep. \textbf{339} (2000), 1-77.
%
\bibitem{MK1} R. Metzler and J. Klafter,
\textit{The restaurant at the random walk: recent developments in the description of anomalous transport by fractional dynamics}.
J. Phys. A \textbf{37} (2004), 161-208.

\bibitem{NV} P. Nguyen and L. V\'{e}ron,
\textit{Boundary singularities of solutions to semilinear fractional equations}.
Adv. Nonlinear Stud. \textbf{18} (2018), 237-267.
%
\bibitem{RS} X. Ros-Oton and J. Serra,
\textit{The Dirichlet problem for the fractional Laplacian: regularity up to the boundary}.
J. Math. Pures Appl. (9) 101 (2014), no. 3, 275-302.
%
\bibitem{S} L. Silvestre,
\textit{H\"{o}lder estimates for solutions of integral-differential equations like the fractional Laplacian}. Indiana Univ. Math. J. \textbf{55} 3 (2006), 1155-1174.
%
\bibitem{Sil07} L. Silvestre,
\textit{Regularity of the obstacle problem for a fractional power of the Laplace operator}.
Comm. Pure Appl. Math. 60 (2007), no. 1, 67-112.

\bibitem{Sil} L. Silvestre,
\textit{On the differentiability of the solution to the Hamilton-Jacobi equation with critical fractional diffusion}.
Adv. Math. 226 (2011), no. 2, 2020-2039.

\bibitem{Stinga19} P.R. Stinga,
\textit{User's guide to the fractional Laplacian and the method of semigroups}.
Handbook of fractional calculus with applications. Vol. 2, 235-265, De Gruyter, Berlin, 2019.

\bibitem{Ve4} L. V\'eron,
\textit{Elliptic equations involving measures},
in: M. Chipot and P. Quittner (eds.), Handbook of Differential Equations: Stationary Partial Differential Equations Vol. I, pp. 593-712, North-Holland, Amsterdam, 2004.

\end{thebibliography}
\end{document}